\documentclass[12pt,reqno]{amsart}

\usepackage{amsmath,amssymb,amsthm, geometry}

\usepackage[colorlinks=false, urlcolor=blue, final]{hyperref}

\geometry{left=3cm, right=3cm, top=2cm, bottom=2cm}
 
\newtheorem{thm}{Theorem}[section]
\newtheorem{lem}[thm]{Lemma}
\newtheorem{cor}[thm]{Corollary}
\newtheorem{prop}[thm]{Proposition}

\theoremstyle{definition}

\theoremstyle{remark}
\newtheorem{ex}[thm]{Example}

\allowdisplaybreaks

\DeclareMathOperator{\spn}{span}
\DeclareMathOperator{\Prim}{Prim}
\DeclareMathOperator{\KP}{KP}

\newcommand{\F}{\mathbb{F}}

\newcommand{\C}{\mathbb{C}}
\newcommand{\Z}{\mathbb{Z}}
\newcommand{\N}{\mathbb{N}}
\newcommand{\inv}{^{-1}}

\begin{document}

\title[The center of  a Kumjian-Pask algebra]
{\boldmath{The center of a Kumjian-Pask algebra}}

\author[J. H. Brown]{Jonathan H. Brown}
\address{Department of Mathematics\\Kansas State University\\
138 Cardwell Hall\\
Manhattan, KS 66506-2602\\
USA.}
\email{ 	brownjh@math.ksu.edu}
\author[A. an Huef]{Astrid an Huef}
\address{Department of Mathematics and Statistics\\
University of Otago\\
Dunedin 9054\\
New Zealand.}
\email{astrid@maths.otago.ac.nz}

\date{12 September, 2012}

\begin{abstract} The Kumjian-Pask algebras are path algebras associated to higher-rank graphs, and generalize the Leavitt path algebras.  We study the center of simple Kumjian-Pask algebras and characterize commutative Kumjian-Pask algebras.
\end{abstract}

\maketitle


\section{Introduction}

Let $E$ be a directed graph $E$ and let $\F$ be a field.  The Leavitt path algebras $L_\F(E)$ of $E$  over $\F$ were first introduced in \cite{AA} and \cite{AMP}, and have been widely studied since then. Many of the properties of Leavitt path algebras can be inferred from properties of the graph,  and for this reason  provide a convenient way to construct examples of algebras with a particular set of attributes.  The Leavitt path algebras are the algebraic analogues of the graph $C^*$-algebras associated to $E$.  In \cite{Tom11}, Tomforde  constructed an analogous Leavitt path algebra $L_R(E)$ over a commutative ring $R$ with $1$, and introduced more techniques from the graph $C^*$-algebra setting to study it.  

In \cite{A-PCaHR11}, Aranda Pino, Clark, an Huef and Raeburn generalized Tomforde's construction and associated to a higher-rank graph $\Lambda$  a graded algebra $\KP_R(\Lambda)$ called the \emph{Kumjian-Pask algebra}. Example~7.1 of \cite{A-PCaHR11} shows that even the class of commutative Kumjian-Pask algebras over a field is strictly larger than the class  of Leavitt path algebras over that field.

In this paper we study the center of Kumjian-Pask algebras. In  \S\ref{motivation} we work over $\C$ and show how the embedding of $\KP_\C(\Lambda)$ in the $C^*$-algebra $C^*(\Lambda)$ can be used together with the Dauns-Hofmann theorem to deduce that the center of a simple Kumjian-Pask algebra is either $\{0\}$ or isomorphic to $\C$. 

More generally, it follows from Theorem~\ref{thm: ZKP fin}, that the center of a ``basically simple'' (see page~\pageref{page-basic}) Kumjian-Pask algebra $\KP_R(\Lambda)$ is either zero or is isomorphic to the underlying ring $R$. Thus our  Theorem~\ref{thm: ZKP fin} generalizes the analogous theorem  for Leavitt path algebras over a field \cite[Theorem~4.2]{A-PC11}, but our proof techniques are very different and more informative. Indeed, the Kumjian-Pask algebra is basically simple  if and only if the graph $\Lambda$ is cofinal and aperiodic, and our proofs show explicitly which of these graph properties are needed to infer various properties of elements in the center.

In Proposition~\ref{prop com} we show that a Kumjian-Pask algebra of a $k$-graph $\Lambda$ is commutative if and only it is  a direct sum of rings of Laurant polynomials in $k$-indeterminates,   if  and only if $\Lambda$  is a disjoint union of copies of the category $\N^k$.  This generalizes Proposition~2.7 of \cite{A-PC11}.


\section{Preliminaries}
We view $\N^k$ as a category with one object and composition given by addition.
 We call a countable category $\Lambda=(\Lambda^0, \Lambda, r, s)$ a $k$-graph if there exists a functor $d:\Lambda\to \N^k$ with the \emph{unique factorization property}: for all $\lambda\in \Lambda$, $d(\lambda)=m+n$ implies there exist unique $\mu,\nu\in \Lambda$ such that $d(\mu)=m, d(\nu)=n$ and $\lambda=\mu\nu$.  Using the unique factorization property, we identify the set of objects $\Lambda^0$  with the set of morphisms of degree $0$. Then, for $n\in \N^k$, we set $\Lambda^n:=d\inv(n)$, and call $\Lambda^n$ the paths of shape $n$ in $\Lambda$ and $\Lambda^0$ the vertices of $\Lambda$. 
A path $\lambda\in \Lambda$ is \emph{closed}  if $r(\lambda)=s(\lambda)$.

For $V,W\subset \Lambda^0$, we set $V\Lambda:=\{\lambda\in \Lambda: r(\lambda)\in V\}$, $\Lambda W:=\{\lambda\in \Lambda: s(\lambda)\in W\}$ and $V\Lambda W:= V\Lambda\cap \Lambda W$; the sets $V\Lambda^n, \Lambda^n W$ and $V\Lambda^n W$ are defined similarly. For simplicity we write $v\Lambda$ for $\{v\} \Lambda$. 

A $k$-graph $\Lambda$ is \emph{row-finite} if $|v\Lambda^n|<\infty$ for all $v\in \Lambda^0$ and $n\in \N^k$ and has \emph{no sources} if $v\Lambda^n\neq \emptyset$ for all $v\in\Lambda^0$ and $n\in \N^k$.  We assume throughout that $\Lambda$ is a row-finite $k$-graph with no sources.

Following \cite[Lemma~3.2(iv)]{RS07}, we say that a $k$-graph $\Lambda$ is  \emph{aperiodic}
if for every $v\in \Lambda^0$ and $m\neq n\in {\mathbb N}^k$ there exists $\lambda\in v\Lambda$ such that
$d(\lambda)\geq m\vee n$ and 
\begin{equation*}\lambda(m,m+d(\lambda)-(m\vee n))\neq \lambda(n,n+d(\lambda)-(m\vee n)).
\end{equation*} This formulation of aperiodicity is equivalent to the original one from \cite[Definition~4.3]{KumPas00} when $\Lambda$ is a row-finite graph with no sources, but is often more convenient since it only involves finite paths.

Let $\Omega_k:=\{(m,n)\in \N^k: m\leq n\}$.  As in \cite[Definition~2.1]{KumPas00} we define an \emph{infinite path} in $\Lambda$ to be a degree-preserving functor $x:\Omega_k\to \Lambda$, and denote the set of infinite paths by $\Lambda^\infty$.   As in \cite[Definition~4.1]{KumPas00} we say $\Lambda$ is \emph{cofinal} if for every infinite path $x$ and every vertex $v$ there exists  $m\in \N^k$  such that $v\Lambda x(m)\neq\emptyset$.

For each $\lambda\in \Lambda$ we introduce a \emph{ghost path} $\lambda^*$; for $v\in \Lambda^0$ we take $v^*=v$. We write $G(\Lambda)$ for the set of  ghost paths and $G(\Lambda^{\neq 0})$ if we exclude the vertices.

Let $R$ be a commutative ring with $1$. Following \cite[Definition~3.1]{A-PCaHR11},
a \emph{Kumjian-Pask $\Lambda$-family} $(P,S)$ in an $R$-algebra $A$  consists of  functions $P:\Lambda^0\to A$ and $S:\Lambda^{\neq 0}\cup G(\Lambda^{\neq 0})\to A$ such that
\begin{enumerate}
\item[(KP1)]  $\{P_v:v\in \Lambda^0\}$ is a set of  mutually orthogonal idempotents;  
\item[(KP2)]  for $\lambda, \mu\in \Lambda^{\neq 0}$ with $r(\mu)=s(\lambda)$,
\[S_{\lambda}S_\mu=S_{\lambda\mu},\  S_{\mu^*}S_{\lambda^*}=S_{(\lambda\mu)^*},\  P_{r(\lambda)}S_\lambda=S_\lambda=S_\lambda P_{s(\lambda)},\  P_{s(\lambda)}S_{\lambda^*}=S_{\lambda^*}=S_{\lambda^*}P_{r(\lambda)};\]
\item[(KP3)] for all $\lambda,\mu\in \Lambda^{\neq 0}$ with $d(\lambda)=d(\mu)$, we have
$S_{\lambda^*}S_\mu=\delta_{\lambda,\mu}P_{s(\lambda)}$;
\item[(KP4)] for all $v\in \Lambda^0$ and $n\in \N^k\setminus\{0\}$, we have
$P_v=\sum_{\lambda\in v\Lambda^n} S_\lambda S_{\lambda^*}$.
\end{enumerate}
By  \cite[Theorem~3.4]{A-PCaHR11} there is 
an $R$-algebra $\KP_R(\Lambda)$, generated by a nonzero Kumjian-Pask $\Lambda$-family $(p, s)$, with the following universal property:  whenever $(Q,T)$ is a Kumjian-Pask $\Lambda$-family in an $R$-algebra $A$, then there is a
unique $R$-algebra homomorphism $\pi_{Q,T}:\KP_R(\Lambda)\to A$ such that
\begin{equation*}\label{defpiqt}
\pi_{Q,T}(p_v) = Q_v,\  \pi_{Q,T}(s_{\lambda}) = T_{\lambda} \text{\ and \ } \pi_{Q,T}(s_{\mu^*}) =T_{\mu^*}\text{\ for $v\in \Lambda^0$ and $\lambda,\mu\in\Lambda^{\neq 0}$.}
\end{equation*}
Also by Theorem~3.4 of \cite{A-PCaHR11}, the subgroups 
\[
\KP_R(\Lambda)_n:=\spn_R\{s_\lambda s_{\mu^*}:\lambda,\mu\in \Lambda\text{\ and\ }d(\lambda)-d(\mu)=n\}\quad\quad (n\in\Z^k)
\]
give a $\Z^k$-grading of $\KP_R(\Lambda)$.  Let $S$ be a $\Z^k$-graded ring; then by the graded-uniqueness theorem \cite[Theorem~4.1]{A-PCaHR11}, a graded homomorphism  $\pi:\KP_R(\Lambda)\to S$ such that $\pi(rp_v)\neq 0$ for nonzero $r\in R$ is injective.

We will often write elements $a\in \KP_R(\Lambda)\setminus\{0\}$ in the \emph{normal form}  of \cite[Lemma~4.2]{A-PCaHR11}: there exists $m\in \N^k$ and a finite $F\subset \Lambda\times \Lambda^m$ such that $a=\sum_{(\alpha,\beta)\in F} r_{\alpha, \beta} s_\alpha s_{\beta^*}$ where $r_{\alpha,\beta}\in R\setminus\{0\}$ and $s(\alpha)=s(\beta)$.


\section{Motivation}\label{motivation}

When $A$ is a simple $C^*$-algebra (over $\C$, of course), it follows from the Dauns-Hofmann Theorem (see, for example, \cite[Theorem~A.34]{tfb}) that the center $Z(A)$ of $A$ is isomorphic to $\C$ if $A$ has an identity and is $\{0\}$ otherwise.  Let $\Lambda$ be a row-finite $k$-graph without sources.   In this short section we  deduce that the center of a simple Kumjian-Pask algebra $\KP_\C(\Lambda)$ is either isomorphic to $\C$ or is $\{0\}$.

\begin{lem}
\label{lem c*}
Suppose $A$ is a simple $C^*$-algebra. If $A$ has an identity, then $z\mapsto z1_{A}$ is an isomorphism of $\C$ onto  the center $Z(A)$ of $A$. If $A$ has no identity, then  $Z(A)=\{0\}$.
\end{lem}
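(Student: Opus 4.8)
The plan is to reduce everything to the Dauns--Hofmann theorem as cited in \cite[Theorem~A.34]{tfb}, which identifies the center $Z(M(A))$ of the multiplier algebra of $A$ with the algebra $C_b(\Prim A)$ of bounded continuous functions on the primitive ideal space. Since $A$ is simple and nonzero, its only proper closed two-sided ideal is $\{0\}$, so the only primitive ideal is $\{0\}$ and $\Prim A$ is a single point; hence $C_b(\Prim A)\cong\C$. The first thing I would record, therefore, is that $Z(M(A))=\C\,1_{M(A)}$.

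In the unital case this is already the whole story: when $A$ has an identity we have $M(A)=A$, so $Z(A)=Z(M(A))=\C\,1_A$. The map $z\mapsto z1_A$ is then plainly a $\C$-algebra homomorphism onto $Z(A)$, and it is injective because $1_A\neq 0$ in a nonzero algebra; these are routine points I would simply note.

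The substance lies in the non-unital case, where I must link the center of $A$ itself to the center of $M(A)$, and the key step is to show $Z(A)\subseteq Z(M(A))$. Given $a\in Z(A)$ and $m\in M(A)$, both $am$ and $ma$ lie in the ideal $A$, and for every $b\in A$ one computes $(am-ma)b=a(mb)-m(ab)=(mb)a-m(ba)=mba-mba=0$, using that $mb\in A$ commutes with $a$ and that $ab=ba$. Thus $(am-ma)A=\{0\}$; taking $b=(am-ma)^*$ and invoking the $C^*$-identity forces $am-ma=0$, so $a$ is central in $M(A)$. I expect this manipulation, together with the appeal to the nondegeneracy of the $C^*$-product (a $C^*$-algebra has no nonzero element annihilating it on the right), to be the main obstacle.

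Finally I would combine the two facts. For non-unital $A$, any $a\in Z(A)\subseteq Z(M(A))=\C\,1_{M(A)}$ has the form $a=z\,1_{M(A)}$. If $z\neq 0$ then $1_{M(A)}=z^{-1}a\in A$, making $A$ unital, a contradiction. Hence $z=0$ and $a=0$, so $Z(A)=\{0\}$, completing the proof.
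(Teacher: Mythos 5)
Your proposal is correct and follows the paper's argument essentially verbatim: Dauns--Hofmann plus $\Prim A=\{\star\}$ gives $Z(M(A))=\C 1_{M(A)}$, the unital case is immediate from $M(A)=A$, and the non-unital case reduces to showing $Z(A)\subseteq Z(M(A))$ and intersecting with $A$. The only divergence is in that inclusion, where you use the annihilator argument $(am-ma)A=\{0\}$ together with the $C^*$-identity, while the paper uses an approximate identity to write $ma=\lim (mu_\lambda)a=\lim a(mu_\lambda)=am$; both are standard and equally valid.
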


\begin{proof} Since $A$ is simple, $\Prim A =\{\star\}$, and $f\mapsto f(\star)$ is an isomorphism of $C_b(\Prim A)$ onto $\C$. By the Dauns-Hofmann Theorem, $C_b(\Prim A)$ is isomorphic to the center $Z(M(A))$ of the multiplier algebra $M(A)$ of $A$. Putting the two isomorphisms together gives an isomorphism $z\mapsto z1_{M(A)}$ of $\C$ onto $Z(M(A))$.

Now suppose that $A$ has an identity. Then $M(A)=A$, and it follows from the first paragraph that $Z(A)$ is isomorphic to $\C$.

Next suppose that $A$ does not have an identity.  Let $a\in Z(A)$, and let $u_\lambda$ be an approximate identity in $A$ and $m\in M(A)$.  Then $ma=\lim (mu_\lambda) a=a\lim (mu_\lambda)=am$. Thus $Z(A)\subset Z(M(A))$.  Now $Z(A)\subset Z(M(A))\cap A=\{z 1_{M(A)}:z\in \C\}\cap A=\{0\}$.
\end{proof}

\begin{lem}\label{lem-shor}
Let $D$ be  a dense subalgebra of a $C^*$-algebra $A$. Then $Z(A)\cap D= Z(D)$. 
\end{lem}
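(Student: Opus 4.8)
The plan is to prove the set equality $Z(A)\cap D = Z(D)$ by establishing the two inclusions separately, using the density of $D$ in $A$ together with the joint continuity of multiplication in a $C^*$-algebra. The inclusion $Z(A)\cap D \subseteq Z(D)$ is immediate: if $a\in Z(A)$ commutes with every element of $A$, then in particular it commutes with every element of the subalgebra $D$, so $a\in Z(D)$ provided $a\in D$. No analysis is needed here, only the observation that commuting with all of $A$ is a stronger condition than commuting with all of $D$.

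The substantive direction is $Z(D)\subseteq Z(A)\cap D$. Let $a\in Z(D)$; since $Z(D)\subseteq D$ we immediately have $a\in D\subseteq A$, so it remains to show $a\in Z(A)$, i.e.\ that $a$ commutes with an arbitrary $b\in A$. First I would fix $b\in A$ and use density to choose a sequence (or net) $(b_n)$ in $D$ with $b_n\to b$ in the norm of $A$. Because $a\in Z(D)$, we have $ab_n = b_n a$ for every $n$. The idea is then to pass to the limit on both sides: by continuity of multiplication in $A$, $ab_n\to ab$ and $b_n a\to ba$, whence $ab=ba$. Since $b$ was arbitrary, $a\in Z(A)$, and combined with $a\in D$ this gives $a\in Z(A)\cap D$.

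The one point requiring care is the passage to the limit, which is where I expect the only (mild) obstacle to lie. Continuity of multiplication follows from the submultiplicativity of the $C^*$-norm: the estimates $\|ab-ab_n\| \le \|a\|\,\|b-b_n\|$ and $\|ba-b_n a\|\le \|b-b_n\|\,\|a\|$ both tend to $0$ since $\|b-b_n\|\to 0$ and $\|a\|$ is a fixed finite constant. This is the standard fact that left and right multiplication by a fixed element are bounded (hence continuous) linear maps on a normed algebra. I would note that, strictly, one should check $a\in Z(A)$ means $a$ commutes with every element of $A$, but since $a$ itself lies in $D\subseteq A$ and is central in $D$, there is no subtlety about whether $a$ belongs to the algebra.

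In summary, the proof is short and essentially formal: the forward inclusion is a definitional triviality, and the reverse inclusion is a density-plus-continuity argument. The only genuine ingredient beyond set-theoretic bookkeeping is that multiplication in a $C^*$-algebra (indeed in any normed algebra) is jointly continuous, which lets the identity $ab_n=b_n a$ survive the limit $b_n\to b$. I do not anticipate needing the completeness of $A$ or any $C^*$-specific identity such as the $C^*$-equation; only the submultiplicative norm estimate is used.
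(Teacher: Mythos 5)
Your proof is correct and follows essentially the same route as the paper: the forward inclusion is noted as trivial, and the reverse inclusion is obtained by approximating an arbitrary $b\in A$ by a net in $D$ and passing the identity $ab_\lambda=b_\lambda a$ to the limit using continuity of multiplication. The extra detail you supply about the norm estimates justifying the limit is a fuller version of the step the paper treats implicitly.
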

\begin{proof} Trivially,  $Z(A)\cap D\subset Z(D)$. 
 To see the reverse inclusion,  let $a\in Z(D)$.  Let $b\in A$ and choose $\{d_\lambda\}\subset D$ such that $d_\lambda\to b$. Then $ba=\lim_\lambda d_\lambda a=\lim_\lambda a d_\lambda=ab$. Now  $a\in Z(A)\cap Z(D)\subset Z(A)\cap D$, and hence $Z(A)\cap D= Z(D)$.
\end{proof}

By \cite[Theorem~6.1]{A-PCaHR11}, $\KP_\C(\Lambda)$ is simple if and only if $\Lambda$ is cofinal and aperiodic, so in the next corollary $\KP_\C(\Lambda)$ is simple. Also, $\KP_\C(\Lambda)$ has an identity if and only if $\Lambda^0$ is finite (see Lemma~\ref{lem: KP unital}  below).
\begin{cor}
\label{cor complex} Suppose that $\Lambda$ is a row-finite,  cofinal, aperiodic $k$-graph with no sources. If $\Lambda^{0}$ is finite, then $z\mapsto z1_{\KP_\C(\Lambda)}$ is an isomorphism of $\C$ onto the center $Z(\KP_\C(\Lambda))$ of $\KP_\C(\Lambda)$. If $\Lambda^{0}$ is infinite, then  $\KP_\C(\Lambda)=\{0\}$.
\end{cor}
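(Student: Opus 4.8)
The plan is to realize $\KP_\C(\Lambda)$ as a dense subalgebra of the graph $C^*$-algebra $C^*(\Lambda)$ and then transport the center computation of Lemma~\ref{lem c*} across the embedding using Lemma~\ref{lem-shor}. First I would produce the embedding. The generating Cuntz--Krieger $\Lambda$-family $(P,S)$ in $C^*(\Lambda)$ is in particular a Kumjian--Pask $\Lambda$-family over $\C$, so the universal property of $\KP_\C(\Lambda)$ yields a homomorphism $\pi=\pi_{P,S}:\KP_\C(\Lambda)\to C^*(\Lambda)$. Its image is $D:=\spn_\C\{S_\alpha S_{\beta^*}:\alpha,\beta\in\Lambda\}$, which is dense in $C^*(\Lambda)$ by definition of the $C^*$-algebra and is $\Z^k$-graded by the degrees $d(\alpha)-d(\beta)$. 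Co-restricting, $\pi:\KP_\C(\Lambda)\to D$ is a graded surjection, and since $C^*(\Lambda)$ is nonzero the gauge-invariant uniqueness theorem gives $P_v\neq 0$, so $\pi(zp_v)=zP_v\neq 0$ for every nonzero $z\in\C$. The graded-uniqueness theorem then forces $\pi$ to be injective, so $\pi$ is a graded isomorphism of $\KP_\C(\Lambda)$ onto $D$.

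With the embedding in hand, Lemma~\ref{lem-shor} gives $Z(D)=Z(C^*(\Lambda))\cap D$, and because $\pi$ is an algebra isomorphism it carries $Z(\KP_\C(\Lambda))$ onto $Z(D)$. Since $\Lambda$ is cofinal and aperiodic, $C^*(\Lambda)$ is simple, so Lemma~\ref{lem c*} describes $Z(C^*(\Lambda))$ completely, and it remains only to intersect with $D$. Here I would split into the two cases according to Lemma~\ref{lem: KP unital}, which says $\KP_\C(\Lambda)$ is unital exactly when $\Lambda^0$ is finite.

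If $\Lambda^0$ is finite, then $1_{\KP_\C(\Lambda)}=\sum_{v\in\Lambda^0}p_v$ exists and $\pi(1)=\sum_v P_v=1_{C^*(\Lambda)}\in D$; by Lemma~\ref{lem c*} we have $Z(C^*(\Lambda))=\C\,1_{C^*(\Lambda)}\subset D$, so $Z(D)=\C\,1_{C^*(\Lambda)}$, and pulling back along $\pi$ shows that $z\mapsto z1_{\KP_\C(\Lambda)}$ is an isomorphism of $\C$ onto $Z(\KP_\C(\Lambda))$. If $\Lambda^0$ is infinite, then neither $\KP_\C(\Lambda)$ nor $C^*(\Lambda)$ has an identity, so Lemma~\ref{lem c*} gives $Z(C^*(\Lambda))=\{0\}$ and hence $Z(D)=\{0\}$; thus $Z(\KP_\C(\Lambda))=\{0\}$. (I read the second sentence of the statement as asserting $Z(\KP_\C(\Lambda))=\{0\}$ rather than $\KP_\C(\Lambda)=\{0\}$.)

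The main obstacle is the first paragraph: pinning down the embedding cleanly. Density of $D$ and its grading are immediate, but injectivity of $\pi$ rests on the graded-uniqueness theorem, which in turn requires $P_v\neq 0$ in $C^*(\Lambda)$; I would obtain this from the gauge-invariant uniqueness theorem, or equivalently from the faithful infinite-path representation, for row-finite $k$-graphs with no sources. I would also invoke the standard facts that $C^*(\Lambda)$ is simple precisely when $\Lambda$ is cofinal and aperiodic, and that $C^*(\Lambda)$ is unital precisely when $\Lambda^0$ is finite; both parallel the algebraic statements already quoted for $\KP_\C(\Lambda)$.
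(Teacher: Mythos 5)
Your proposal is correct and follows essentially the same route as the paper: embed $\KP_\C(\Lambda)$ as a dense graded subalgebra of $C^*(\Lambda)$ via the universal property and the graded-uniqueness theorem (the paper cites Proposition~7.3 of \cite{A-PCaHR11} for this step), invoke simplicity of $C^*(\Lambda)$ from \cite{RS07}, and then combine Lemma~\ref{lem c*} with Lemma~\ref{lem-shor} in the two cases according to whether $\Lambda^0$ is finite. You are also right that the final clause of the statement should read $Z(\KP_\C(\Lambda))=\{0\}$, which is what the paper's own argument actually establishes.
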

\begin{proof} Let $(p,s)$ be a generating Kumjian-Pask $\Lambda$-family for $\KP_\C(\Lambda)$ and $(q,t)$ a generating Cuntz-Krieger $\Lambda$-family for $C^*(\Lambda)$.  Then $(q,t)$ is a Kumjian-Pask $\Lambda$-family in $C^*(\Lambda)$, and the universal property of $\KP_\C(\Lambda)$ gives a  $*$-homomorphism $\pi_{q,t}:\KP_\C(\Lambda)\to C^*(\Lambda)$ which takes $s_\mu s_{\nu^*}$ to $t_\mu t_\nu^*$. It follows from the graded-uniqueness theorem that $\pi_{q,t}$ is a $*$-isomorphism onto a dense $*$-subalgebra of $C^*(\Lambda)$ (see Proposition~7.3 of \cite{A-PCaHR11}).  Since $\Lambda$ is aperiodic and cofinal, $C^*(\Lambda)$ is simple by \cite[Theorem~3.1]{RS07}.

Now suppose that $\Lambda^0$ is finite. Then $\KP_\C(\Lambda)$ has identity $1_{\KP_\C(\Lambda)}=\sum_{v\in\Lambda^0} p_v$  and $C^*(\Lambda)$ has identity $1_{C^*(\Lambda)}=\sum_{v\in\Lambda^0} q_v$, and $\pi_{q,t}$ is unital. By Lemma~\ref{lem c*},  $Z(C^*(\Lambda))=\{z1_{C^*(\Lambda)}: z\in\C\}$.  By Lemma~\ref{lem-shor}, $Z(\pi_{q,t}(\KP_\C(\Lambda)))=Z(C^*(\Lambda))\cap \pi_{q,t}(\KP_\C(\Lambda))=\{z1_{C^*(\Lambda)}: z\in\C\}$. Since $\pi_{q,t}$ is unital, $Z(\KP_\C(\Lambda))$ is isomorphic to $\C$ as claimed.

Next suppose that $\Lambda^0$ is infinite. Then $Z(C^*(\Lambda))=\{0\}$ and $Z(\pi_{q,t}(\KP_\C(\Lambda)))=Z(C^*(\Lambda))\cap \pi_{q,t}(\KP_\C(\Lambda))=\{0\}$, giving $\KP_\C(\Lambda)=\{0\}$.
\end{proof}


\section{The center of a  Kumjian-Pask algebra}
Our goal is to extend Corollary~\ref{cor complex} to  Kumjian-Pask algebras over arbitrary rings. Throughout $R$ is a commutative ring with $1$ and $\Lambda$ is a row-finite $k$-graph with no sources.

We will need Lemma~\ref{lem: lin inde} several times. For notational convenience, for $v\in \Lambda^0$,  $s_v$ or $s_{v^*}$ means $p_v$.  So  when $m=0$ Lemma~\ref{lem: lin inde}, says that the set $\{s_{\alpha}: \alpha\in\Lambda\}$ is linearly independent. 
 
\begin{lem}\label{lem: lin inde} Let $m\in \N^k$. Then    $\{s_{\alpha}s_{\beta^*}: \text{$s(\alpha)=s(\beta)$ and  $d(\beta)=m$}\}$ is a  linearly independent subset of $\KP_R(\Lambda)$.\end{lem}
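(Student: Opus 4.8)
The plan is to isolate each coefficient by multiplying a vanishing linear combination on the left and right by suitable generators and invoking (KP3). Suppose $\sum_{(\alpha,\beta)\in F} r_{\alpha,\beta}s_\alpha s_{\beta^*}=0$, where the pairs $(\alpha,\beta)$ are distinct, $s(\alpha)=s(\beta)$, $d(\beta)=m$, and $r_{\alpha,\beta}\in R$; I must show every $r_{\alpha,\beta}=0$. The first step is to reduce to the homogeneous case. Since $s_\alpha s_{\beta^*}\in\KP_R(\Lambda)_{d(\alpha)-m}$ and the decomposition $\KP_R(\Lambda)=\bigoplus_{p\in\Z^k}\KP_R(\Lambda)_p$ is a direct sum, I can collect the terms according to the value of $d(\alpha)$ and conclude that, for each fixed $n\in\N^k$, the partial sum over $\{(\alpha,\beta)\in F:d(\alpha)=n\}$ already vanishes. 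Thus it suffices to treat a finite vanishing combination in which every $\alpha$ has a common degree $n$ and every $\beta$ has degree $m$.

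With degrees now fixed, fix one pair $(\alpha_0,\beta_0)$ occurring in the sum and multiply on the left by $s_{\alpha_0^*}$ and on the right by $s_{\beta_0}$. Because $d(\alpha_0)=d(\alpha)=n$ and $d(\beta)=d(\beta_0)=m$, (KP3) gives $s_{\alpha_0^*}s_\alpha=\delta_{\alpha_0,\alpha}p_{s(\alpha_0)}$ and $s_{\beta^*}s_{\beta_0}=\delta_{\beta,\beta_0}p_{s(\beta_0)}$, so every summand except the one indexed by $(\alpha_0,\beta_0)$ is annihilated. Writing $w:=s(\alpha_0)=s(\beta_0)$ and using (KP1), what remains is $r_{\alpha_0,\beta_0}\,p_w=0$. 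The convention $s_v=p_v$ makes this computation valid verbatim in the degenerate cases $n=0$ or $m=0$, where (KP3) is simply the orthogonality of the idempotents $\{p_v\}$ from (KP1).

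Finally, I invoke the nondegeneracy of the generating family: by \cite[Theorem~3.4]{A-PCaHR11} the map $r\mapsto r p_v$ is injective for each $v\in\Lambda^0$, so $r_{\alpha_0,\beta_0}\,p_w=0$ forces $r_{\alpha_0,\beta_0}=0$. As $(\alpha_0,\beta_0)$ was arbitrary, all coefficients vanish and the set is linearly independent. The only genuine obstacle is the asymmetry of the hypotheses: the $\beta$'s are pinned to a single degree $m$, but the $\alpha$'s are not, and $s_{\alpha_0^*}s_\alpha$ collapses to a clean Kronecker delta only when $d(\alpha)=d(\alpha_0)$. The grading reduction in the first step is precisely what removes this difficulty, after which the argument is a routine two-sided application of (KP3).
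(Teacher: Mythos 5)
Your proof is correct and rests on the same two ingredients as the paper's argument: a two-sided application of (KP3) to isolate a single coefficient, and the $\Z^k$-grading to dispose of the terms whose $\alpha$ has a different degree. The only organizational difference is that you invoke the grading up front, reducing to a combination in which all the $\alpha$'s share a common degree $n$, whereas the paper sandwiches $s_{\sigma^*}(\cdot)s_\tau$ immediately and then extracts the $0$-graded component of the residue, citing \cite[Lemma~3.1]{A-PCaHR11} to place each leftover $s_{\sigma^*}s_\alpha$ in the graded component of degree $d(\alpha)-d(\sigma)$; your ordering makes the second application of (KP3) a clean Kronecker delta and avoids that extra citation, and your appeal to the injectivity of $r\mapsto rp_v$ from \cite[Theorem~3.4]{A-PCaHR11} is the right (and slightly more careful) way to finish.
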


\begin{proof} Let $F$ be a finite subset of $\{(\alpha,\beta)\in \Lambda\times \Lambda^m : s(\alpha)=s(\beta) \}$, and suppose that $\sum_{(\alpha,\beta)\in F}r_{\alpha,\beta}s_{\alpha}s_{\beta^*}=0$.  Fix $(\sigma,\tau)\in F$. Since  all the $\beta$ have degree $m$, using (KP3) twice we obtain
\begin{align}
0&=s_{\sigma^*}\Big( \sum_{(\alpha,\beta)\in F}r_{\alpha,\beta}s_{\alpha}s_{\beta^*}\Big) s_\tau\notag
\\
&=r_{\sigma,\tau}p_{s(\sigma)}+\sum_{(\alpha,\beta)\in F\setminus\{(\sigma,\tau)\}}r_{\alpha,\tau} s_{\sigma^*}s_\alpha s_{\beta^*}s_\tau\notag\\
&=r_{\sigma,\tau}p_{s(\sigma)}+\sum_{\substack{(\alpha,\tau)\in F\\ \alpha\neq \sigma}}r_{\alpha,\tau} s_{\sigma^*}s_\alpha.\label{eq-independence}
\end{align} 
If $d(\sigma)=d(\alpha)$ and $\sigma\neq \alpha$, then $s_{\sigma^*}s_\alpha=0$ by (KP3). If $d(\sigma)\neq d(\alpha)$ then, by \cite[Lemma~3.1]{A-PCaHR11},  $s_{\sigma^*}s_\alpha$ is a linear combination of $s_\mu s_{\nu^*}$ where $d(\mu)-d(\nu)=d(\alpha)-d(\sigma)$. It follows that the $0$-graded component of \eqref{eq-independence} is $r_{\sigma,\tau}p_{s(\sigma)}$. Thus $0=r_{\sigma,\tau}p_{s(\sigma)}$.  But $p_{s(\sigma)}\neq 0$ by Theorem~3.4 of \cite{A-PCaHR11}. Hence $r_{\sigma,\tau}=0$. Since $(\sigma, \tau)\in F$ was arbitrary, it follows that $\{s_{\alpha}s_{\beta^*}: \text{$s(\alpha)=s(\beta)$ and  $d(\beta)=m$}\}$ is linearly independent.
\end{proof}

The next lemma describes properties of elements in the center of $\KP_R(\Lambda)$.

\begin{lem}\label{lem: ZKP arb} Let  $a\in Z(\KP_R(\Lambda))\setminus\{0\}$ be in normal form $\sum_{(\alpha,\beta)\in F} r_{\alpha,\beta} s_{\alpha}s_{\beta^*}$.  
\begin{enumerate}
\item \label{cond: ZKP arb r=r} If $(\sigma,\tau)\in F$, then $r(\sigma)=r(\tau)$.
\item \label{cond: ZKP arb supsat} Let $W=\{v\in\Lambda^0: ~\exists (\alpha,\beta)\in F\text{~~with~~}v=r(\beta) \}.$  If $\mu\in \Lambda W$, then $r(\mu)\in W$.
\item \label{cond: ZKP arb r=s}  If $(\sigma,\tau)\in F$, then there exists $(\alpha,\beta)\in F$ such that $r(\alpha)=r(\beta)=s(\sigma)=s(\tau).$
\item \label{cond: ZKP arb closed path}  There exists $l\in \N\setminus\{0\}$ and $\{(\alpha_i,\beta_i)\}_{i=1}^l\subset F$ such that $\beta_1\cdots \beta_l$ is a closed path in $\Lambda.$
\end{enumerate}
\end{lem}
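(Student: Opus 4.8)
The plan is to exploit centrality by commuting $a$ with the generators $p_v$, $s_\mu$ and $s_{\mu^*}$, and then to read off information by comparing coefficients through Lemma~\ref{lem: lin inde}. The whole point of the normal form is that every $\beta$ occurring in $a$ has the \emph{same} degree $m$, so every product I meet will be a linear combination of terms $s_{\alpha'}s_{\beta'^*}$ with $d(\beta')=m$, to which Lemma~\ref{lem: lin inde} applies verbatim. For part~\eqref{cond: ZKP arb r=r} I would compute $p_v a$ and $ap_v$ for $v\in\Lambda^0$: using (KP2) one gets $p_v a=\sum_{(\alpha,\beta)\in F,\,r(\alpha)=v} r_{\alpha,\beta}s_\alpha s_{\beta^*}$ and $ap_v=\sum_{(\alpha,\beta)\in F,\,r(\beta)=v} r_{\alpha,\beta}s_\alpha s_{\beta^*}$. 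Since $a$ is central these agree, and since the $s_\alpha s_{\beta^*}$ are linearly independent with nonzero coefficients, a pair $(\alpha,\beta)$ contributes to the first sum iff it contributes to the second, i.e.\ $r(\alpha)=v\iff r(\beta)=v$; taking $v=r(\sigma)$ gives $r(\tau)=r(\sigma)$. The same computation shows $W=\{v\in\Lambda^0: p_v a\neq 0\}$, a characterization I will use in the remaining parts.

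For part~\eqref{cond: ZKP arb supsat}, let $\mu\in\Lambda W$ and set $v=s(\mu)\in W$. Then $s_\mu a=\sum_{(\alpha,\beta)\in F,\,r(\alpha)=v} r_{\alpha,\beta}s_{\mu\alpha}s_{\beta^*}$, and this index set is nonempty precisely because $v\in W$. The terms $s_{\mu\alpha}s_{\beta^*}$ all have $d(\beta)=m$ and correspond to distinct pairs $(\mu\alpha,\beta)$ (left cancellation of $\mu$), so Lemma~\ref{lem: lin inde} forces $s_\mu a\neq 0$. On the other hand $s_\mu=p_{r(\mu)}s_\mu$ together with centrality gives $s_\mu a=p_{r(\mu)}(as_\mu)=(p_{r(\mu)}a)s_\mu$, whence $p_{r(\mu)}a\neq 0$ and therefore $r(\mu)\in W$.

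Part~\eqref{cond: ZKP arb r=s} is the crux, and the trick that unlocks it is to multiply $a$ by $s_\tau$ rather than by a general $s_\mu$ or $s_{\mu^*}$: because $d(\beta)=d(\tau)=m$ for \emph{every} $\beta$ occurring in $a$, (KP3) collapses $s_{\beta^*}s_\tau=\delta_{\beta,\tau}p_{s(\tau)}$, so $as_\tau=\sum_{(\alpha,\tau)\in F} r_{\alpha,\tau}s_\alpha$, which is nonzero since it contains the term $r_{\sigma,\tau}s_\sigma$. By centrality $s_\tau a=as_\tau\neq 0$; but computing the left-hand side directly gives $s_\tau a=\sum_{(\alpha,\beta)\in F,\,r(\alpha)=s(\tau)} r_{\alpha,\beta}s_{\tau\alpha}s_{\beta^*}$, so its index set must be nonempty. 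This produces $(\alpha,\beta)\in F$ with $r(\alpha)=s(\tau)=s(\sigma)$, and part~\eqref{cond: ZKP arb r=r} then upgrades this to $r(\beta)=r(\alpha)=s(\sigma)=s(\tau)$, which is exactly the assertion.

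Finally, part~\eqref{cond: ZKP arb closed path} follows by iterating part~\eqref{cond: ZKP arb r=s}. Starting from any $(\alpha_1,\beta_1)\in F$ (which exists since $a\neq 0$), part~\eqref{cond: ZKP arb r=s} repeatedly produces $(\alpha_{i+1},\beta_{i+1})\in F$ with $r(\beta_{i+1})=s(\beta_i)$, so that $\beta_1\beta_2\cdots$ is a genuine path. As $F$ is finite, some pair must repeat, say $(\alpha_i,\beta_i)=(\alpha_j,\beta_j)$ with $i<j$, and then $\beta_i\beta_{i+1}\cdots\beta_{j-1}$ is a closed path, since its source is $s(\beta_{j-1})=r(\beta_j)=r(\beta_i)$, its range. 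I expect part~\eqref{cond: ZKP arb r=s} to be the only genuine obstacle: the naive attempts (commuting with $s_\mu$ or $s_{\mu^*}$) generate $s_{\sigma^*}s_\alpha$ terms of mixed degree that are awkward to control, whereas multiplying by $s_\tau$ on the side where $d(\tau)=m$ keeps everything within reach of Lemma~\ref{lem: lin inde}. The remaining parts are essentially bookkeeping once one commits to commuting with generators and keeping every $\beta$ of degree $m$.
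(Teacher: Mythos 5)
Your argument is correct, and in outline it matches the paper's: commute $a$ with generators and extract information from the normal form via Lemma~\ref{lem: lin inde}. The interesting divergence is how the two proofs obtain the nonvanishing statements that drive parts \eqref{cond: ZKP arb r=r} and \eqref{cond: ZKP arb r=s}. The paper twice invokes the external fact $s_{\sigma^*}as_\tau\neq 0$ from \cite[Lemma~2.3]{BH} and then uses centrality to slide $a$ past $p_{r(\sigma)}$, $p_{r(\tau)}$ (for \eqref{cond: ZKP arb r=r}) or past $s_{\sigma^*}$ (for \eqref{cond: ZKP arb r=s}, where the sum collapses onto pairs with $r(\beta)=s(\sigma)$). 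You instead stay self-contained: for \eqref{cond: ZKP arb r=r} you compare $p_va$ with $ap_v$ coefficientwise, which works because $r_{\alpha,\beta}(\delta_{r(\alpha),v}-\delta_{r(\beta),v})=0$ forces the Kronecker deltas to agree even over a ring with zero-divisors; and for \eqref{cond: ZKP arb r=s} you observe that $as_\tau=\sum_{(\alpha,\tau)\in F}r_{\alpha,\tau}s_\alpha\neq 0$ follows directly from (KP3) (all $\beta$'s have degree $d(\tau)=m$) and the $m=0$ case of Lemma~\ref{lem: lin inde}, then read off a pair with $r(\alpha)=s(\tau)$ from $s_\tau a=as_\tau\neq 0$. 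This replacement of the appeal to \cite{BH} by a two-line computation is a genuine simplification of the dependencies, at no cost in length. Parts \eqref{cond: ZKP arb supsat} and \eqref{cond: ZKP arb closed path} are essentially identical to the paper's (the former run contrapositively rather than by contradiction, the latter the same pigeonhole on $F$).
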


\begin{proof}  \eqref{cond: ZKP arb r=r} Let $(\sigma,\tau)\in F$.  By  \cite[Lemma~2.3]{BH} we have  $0\neq s_{\sigma^*}as_\tau$. Since $a\in Z(\KP_R(\Lambda))$
\[
0\neq s_{\sigma^*}p_{r(\sigma)}ap_{r(\tau)}s_\tau=s_{\sigma^*}ap_{r(\sigma)}p_{r(\tau)}s_\tau=\delta_{r(\sigma),r(\tau)}s_{\sigma^*}as_\tau.\]
Hence $r(\sigma)=r(\tau)$.

\eqref{cond: ZKP arb supsat} By way of contradiction, assume  there exists $\mu\in \Lambda W$ such that $r(\mu)\notin W$. Then $p_v p_{r(\mu)}=0$ for all $v\in W$.  Thus 
\[
ap_{r(\mu)}= \sum_{(\alpha,\beta)\in F} r_{\alpha,\beta} s_{\alpha}s_{\beta^*}p_{r(\beta)}p_{r(\mu)}=0.
\]
Since $a\in Z(\KP_R(\Lambda))$  we get $s_\mu a=as_\mu=ap_{r(\mu)}s_\mu=0$. 
Since $s(\mu)\in W$, there exist $(\alpha',\beta')\in F$ with $r(\beta')=s(\mu)$.  Then $r(\alpha')=s(\mu)$ also by \eqref{cond: ZKP arb r=r}.  Thus $S:=\{(\alpha,\beta)\in F: s(\mu)=r(\alpha)\}$ is non-empty, and 
\begin{align*}
0=s_\mu a=\sum_{(\alpha,\beta)\in S} r_{\alpha,\beta} s_{\mu\alpha}s_{\beta^*}. 
\end{align*}
But $\{s_{\mu\alpha}s_{\beta^*}: (\alpha,\beta)\in S\}$ is linearly independent by Lemma~\ref{lem: lin inde}, and hence  $r_{\alpha,\beta}=0$ for all $(\alpha,\beta)\in S$. This contradicts the given normal form for $a$.

\eqref{cond: ZKP arb r=s} Let $(\sigma,\tau)\in F$.  Then $s(\sigma)=s(\tau)$ by definition of normal form. By Lemma~2.3 in \cite{BH} we have $s_{\sigma^*} a s_\tau\neq 0$.
Since $a\in Z(\KP_R(\Lambda))$,
\begin{equation}\label{eq com}0\neq s_{\sigma^*} a s_\tau=as_{\sigma^*}  s_\tau=\sum_{(\alpha,\beta)\in F} r_{\alpha,\beta} s_{\alpha}s_{\beta^*}s_{\sigma^*}s_\tau=\sum_{\substack{(\alpha,\beta)\in F\\r(\beta)=s(\sigma)}} r_{\alpha,\beta} s_{\alpha}s_{(\sigma\beta)^*}s_\tau.
\end{equation}
In particular, the set $\{(\alpha,\beta)\in F: r(\beta)=s(\sigma)\}$ is nonempty.  So there exists $(\alpha',\beta')\in F$ such that $r(\beta')=s(\sigma)$. Since  $r(\alpha')=r(\beta')$ from \eqref{cond: ZKP arb r=r}, we are done.

\eqref{cond: ZKP arb closed path} Let $M=|F|+1$. Using \eqref{cond: ZKP arb r=s} there exists a path $\beta_1\dots\beta_M$ such that, for $1\leq i\leq M$, there exists $\alpha_i\in\Lambda$ with $(\alpha_i,\beta_i)\in F$. Since $M>|F|$, there exists $i<j\in\{1,\dots,M\}$ such that $\beta_i=\beta_j$.  Then $\beta_i\dots\beta_{j-1}$ is a closed path.
\end{proof}

The next corollary follows from Lemma~\ref{lem: ZKP arb}\eqref{cond: ZKP arb closed path}.
\begin{cor}
Let $\Lambda$ be a row-finite $k$-graph with no sources and $R$ a commutative ring with $1$. If $\Lambda$ has no closed paths then the center $Z(\KP_R(\Lambda))=\{0\}$. 
\end{cor}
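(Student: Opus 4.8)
The plan is to argue by contraposition: I will show that any nonzero central element forces the existence of a closed path in $\Lambda$, so that if $\Lambda$ has no closed paths the center must be $\{0\}$. Concretely, suppose for contradiction that $Z(\KP_R(\Lambda))\neq\{0\}$, and pick a nonzero $a\in Z(\KP_R(\Lambda))$. By \cite[Lemma~4.2]{A-PCaHR11} I may write $a$ in normal form $\sum_{(\alpha,\beta)\in F} r_{\alpha,\beta}s_\alpha s_{\beta^*}$ with $F$ finite, each $r_{\alpha,\beta}\in R\setminus\{0\}$, and $s(\alpha)=s(\beta)$ for all $(\alpha,\beta)\in F$.

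The main engine is Lemma~\ref{lem: ZKP arb}\eqref{cond: ZKP arb closed path}, which applies verbatim to this $a$: it produces an integer $l\geq 1$ and indices $(\alpha_1,\beta_1),\dots,(\alpha_l,\beta_l)\in F$ such that $\beta_1\cdots\beta_l$ is a \emph{closed} path in $\Lambda$. Thus $\beta_1\cdots\beta_l$ is itself a closed path in $\Lambda$, contradicting the hypothesis that $\Lambda$ has no closed paths. Hence no nonzero central element can exist, and $Z(\KP_R(\Lambda))=\{0\}$.

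Since virtually all the work has already been packaged into Lemma~\ref{lem: ZKP arb}, the proof is essentially a one-line invocation; the only thing to verify is that part~\eqref{cond: ZKP arb closed path} of that lemma is stated for an \emph{arbitrary} nonzero central element in normal form, which it is. I therefore expect no genuine obstacle here: the substance lies entirely in Lemma~\ref{lem: ZKP arb}, whose part~\eqref{cond: ZKP arb r=s} chains factorizations $r(\beta')=s(\sigma)$ together using centrality and the nonvanishing $s_{\sigma^*}as_\tau\neq 0$ coming from \cite[Lemma~2.3]{BH}, and whose part~\eqref{cond: ZKP arb closed path} then applies a pigeonhole argument once the chain exceeds length $|F|$. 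The final corollary merely harvests that conclusion.
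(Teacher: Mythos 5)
Your proposal is correct and is exactly the paper's argument: the authors state that the corollary ``follows from Lemma~\ref{lem: ZKP arb}\eqref{cond: ZKP arb closed path}'', which is precisely the contrapositive/contradiction step you carry out. No issues.
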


 The next lemma provides a description of elements of the center of $\KP_R(\Lambda)$ when $\Lambda$ is cofinal.  

\begin{lem}\label{lem: ZKP cof}  Suppose that $\Lambda$ is cofinal. Let $a\in Z(\KP_R(\Lambda))\setminus\{0\}$ be in normal form  $\sum_{(\alpha,\beta)\in F} r_{\alpha,\beta} s_{\alpha}s_{\beta^*}$.   Then $\{v\in\Lambda^0: \exists (\alpha,\beta)\in F\text{~~with~~}  v=r(\beta)\}=\Lambda^0$.\end{lem}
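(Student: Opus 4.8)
The plan is to prove that $W:=\{v\in\Lambda^0:\exists\,(\alpha,\beta)\in F\text{ with }v=r(\beta)\}$ equals $\Lambda^0$ by exhibiting a single infinite path every vertex of which lies in $W$, and then confronting it with cofinality. The engine is Lemma~\ref{lem: ZKP arb}\eqref{cond: ZKP arb supsat}, which says $s(\nu)\in W\Rightarrow r(\nu)\in W$; applying it to each terminal subpath of a path $\nu$ yields more, namely that if $s(\nu)\in W$ then \emph{every} vertex of $\nu$ lies in $W$. Suppose for the moment we have an infinite path $x$ with $x(p)\in W$ for all $p\in\N^k$. Given any $v\in\Lambda^0$, cofinality provides $p$ with $v\Lambda x(p)\neq\emptyset$; choosing $\lambda\in v\Lambda x(p)$ we have $s(\lambda)=x(p)\in W$, so $v=r(\lambda)\in W$ by Lemma~\ref{lem: ZKP arb}\eqref{cond: ZKP arb supsat}. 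As $v$ is arbitrary this gives $W=\Lambda^0$, so everything reduces to constructing $x$.

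To build $x$ I would repeat the closed path $\mu:=\beta_1\cdots\beta_l$ supplied by Lemma~\ref{lem: ZKP arb}\eqref{cond: ZKP arb closed path}. Its range and source coincide at $w:=r(\mu)=r(\beta_1)\in W$, and $s(\mu^j)=w\in W$ for every $j$, so by the strengthened form of Lemma~\ref{lem: ZKP arb}\eqref{cond: ZKP arb supsat} all vertices of every power $\mu^j$ lie in $W$. If $d(\mu)\geq(1,\dots,1)$, then $\mu^\infty$ is a genuine infinite path (for each $p$ pick $j$ with $jd(\mu)\geq p$ and read off the degree-$p$ initial segment of $\mu^j$), and every vertex $x(p)$ it meets is a vertex of some $\mu^j$, hence lies in $W$. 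This is exactly the path needed in the first paragraph.

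The only obstacle --- and the heart of the matter --- is that the common degree $m=d(\beta)$ of the $\beta$'s need not be coordinatewise positive, so $d(\mu)=lm$ may have zero coordinates and $\mu^\infty$ need not be an infinite path. I would remove this by rewriting $a$ in a normal form of larger degree. Take $m'=m+(1,\dots,1)$, so that $m'\geq(1,\dots,1)$ and $m'-m\neq 0$; using (KP4) to expand $p_{s(\beta)}=\sum_{\gamma\in s(\beta)\Lambda^{m'-m}}s_\gamma s_{\gamma^*}$ together with (KP2), each term of the normal form becomes $s_\alpha s_{\beta^*}=\sum_{\gamma}s_{\alpha\gamma}s_{(\beta\gamma)^*}$, with $d(\beta\gamma)=m'$ and $s(\alpha\gamma)=s(\beta\gamma)$. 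By the unique factorization property the pairs $(\alpha\gamma,\beta\gamma)$ are pairwise distinct, so this is again a normal form for $a$; and since $r(\beta\gamma)=r(\beta)$ the associated set $W$ is unchanged. Thus Lemma~\ref{lem: ZKP arb} applies verbatim to the new normal form, the closed path it produces now has degree a positive multiple of $m'\geq(1,\dots,1)$, and the construction of the previous paragraph goes through.

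I expect the routine-but-necessary checks to be (i) that the rewriting yields a legitimate normal form (distinctness of the new index pairs, nonvanishing of the coefficients, and invariance of $W$), and (ii) that $\mu^\infty$ is a bona fide degree-preserving functor on $\Omega_k$ whose vertices all occur in the $\mu^j$; both follow directly from unique factorization. The conceptual content lies entirely in recognising that $W$ must contain a \emph{coordinatewise positive} closed path, after which cofinality and the co-hereditary property of Lemma~\ref{lem: ZKP arb}\eqref{cond: ZKP arb supsat} do the rest.
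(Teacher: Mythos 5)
Your proposal is correct and follows essentially the same route as the paper's proof: reshape the normal form via (KP4) so the $\beta$'s have degree at least $(1,\dots,1)$ (the paper uses $n=m\vee(1,\dots,1)$ rather than $m+(1,\dots,1)$, which is immaterial), extract a closed path from Lemma~\ref{lem: ZKP arb}\eqref{cond: ZKP arb closed path}, iterate it to an infinite path, and combine cofinality with Lemma~\ref{lem: ZKP arb}\eqref{cond: ZKP arb supsat}. The only cosmetic difference is that you first show every vertex of the infinite path lies in $W$, whereas the paper appends the segment $x(q,q')$ to $\nu$ and applies Lemma~\ref{lem: ZKP arb}\eqref{cond: ZKP arb supsat} once to $\nu\lambda$.
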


\begin{proof} Write $W:=\{v\in\Lambda^0: \exists (\alpha,\beta)\in F\text{~~with~~}  v=r(\beta)\}$.
 Since $\sum_{(\alpha,\beta)\in F} r_{\alpha,\beta} s_{\alpha}s_{\beta^*}$ is in normal form, $F\subset \Lambda\times \Lambda^m$ for some $m\in\N^k$.  Let $n=m\vee (1,1,\ldots, 1)$.  By (KP4), for each $(\alpha,\beta)\in F$,  we have $s_{\alpha}s_{\beta^*}=\sum_{\mu\in s(\alpha)\Lambda^{n-m}} s_{(\alpha\mu)}s_{(\beta\mu)^*}$.  By ``reshaping'' each pair of paths in $F$ in this way, collecting like terms and dropping those with zero coefficients, we see that there exists $G\subset \Lambda\times  \Lambda^n$ and $r'_{\gamma,\eta}\in R\setminus\{0\}$ such that $a=\sum_{(\gamma,\eta)\in G} r'_{\gamma,\eta} s_\gamma s_{\eta^*}$ is also in normal form. By construction, $W'=\{v\in\Lambda^0: \exists (\gamma,\eta)\in G\text{~~with~~}  v=r(\eta)\}\subset W$.

Let $v\in \Lambda^0$.  Using Lemma~\ref{lem: ZKP arb}\eqref{cond: ZKP arb closed path}, there exists $\{(\gamma_i,\eta_i)\}_{i=1}^l\subset G$ such that $\eta_1\cdots \eta_l$ is a closed path.  Since $d(\eta_i)\geq (1,1,\ldots ,1)$ for all $i$,  $x:=\eta_1\cdots\eta_l\eta_1\cdots\eta_l\eta_1\cdots$ is an infinite path. By cofinality, there exist $q\in\N^k$ and $\nu\in v\Lambda x(q)$. By the definition of $x$, there exist  $q'\geq q$ and  $j$ such that $x(q')=r(\eta_j)$. Let $\lambda=x(q,q')$.  Then $\nu\lambda\in v\Lambda W'$. By Lemma~\ref{lem:  ZKP arb}\eqref{cond: ZKP arb supsat}, $v=r(\nu\lambda)\in W'$ as well. Thus $W'=\Lambda^0$, and since $W'\subset W$ we  have $W=\Lambda^0$.
\end{proof}

The next lemma provides a description of elements of the center of $\KP_R(\Lambda)$ when $\Lambda$ is aperiodic.

\begin{lem}\label{lem: ZKP aper} Suppose that $\Lambda$ is  aperiodic and $a\in Z(\KP_R(\Lambda))\setminus\{0\}$.  Then there exist $n\in \N^k$  and $G\subset \Lambda^n$ such that $a=\sum_{\alpha\in G} r_{\alpha} s_{\alpha}s_{\alpha^*}$ is in normal form.\end{lem}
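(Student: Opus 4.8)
The plan is to reduce first to the case that $a$ is homogeneous, and then to separate the degree-$0$ part (where aperiodicity is not needed) from the parts of nonzero degree (which aperiodicity must annihilate). The reduction rests on the observation that the center is a graded subalgebra: writing $a=\sum_{n\in\Z^k}a_n$ for the decomposition into homogeneous components and comparing homogeneous components in $ab=ba$ for $b$ homogeneous gives $a_nb=ba_n$ for every $n$, and since homogeneous elements span $\KP_R(\Lambda)$ each $a_n$ is central. So it suffices to treat a homogeneous central $a$ of some degree $n$ and prove: (i) if $n=0$ then $a$ has a diagonal normal form $\sum_\alpha r_\alpha s_\alpha s_{\alpha^*}$; and (ii) if $n\neq 0$ then $a=0$. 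Granting these, a degree-$0$ diagonal normal form automatically has all $\alpha$ of the common degree $m$, so we may take $n=m$ and $G=\{\alpha:(\alpha,\alpha)\in F\}$.

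For (i), write $a=\sum_{(\alpha,\beta)\in F}r_{\alpha,\beta}s_\alpha s_{\beta^*}$ in normal form; homogeneity of degree $0$ forces $d(\alpha)=d(\beta)=m$ for every term. Fix $(\sigma,\tau)\in F$. By \cite[Lemma~2.3]{BH} we have $s_{\sigma^*}as_\tau\neq 0$, while centrality gives $s_{\sigma^*}as_\tau=as_{\sigma^*}s_\tau$. If $\sigma\neq\tau$ then $d(\sigma)=d(\tau)$ and (KP3) give $s_{\sigma^*}s_\tau=0$, whence $s_{\sigma^*}as_\tau=0$, a contradiction. Hence $\sigma=\tau$ for every $(\sigma,\tau)\in F$, which is exactly the diagonal form. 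This step uses only centrality, the normal form and (KP3); aperiodicity plays no role here.

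Everything therefore comes down to (ii), which is where I expect the main difficulty and where aperiodicity enters. Suppose $n\neq 0$ and $a\neq 0$, and fix $(\sigma,\tau)\in F$, so $d(\sigma)=m+n$, $d(\tau)=m$, with $w=s(\sigma)=s(\tau)$. The key is that centrality collapses a two-sided compression: since $a$ commutes with $s_\sigma$,
\[
s_{\sigma^*}as_\sigma=s_{\sigma^*}s_\sigma a=p_wa=ap_w,
\]
and expanding the left-hand side by (KP3) (every first coordinate in $F$ has degree $m+n=d(\sigma)$) gives $ap_w=\sum_{(\sigma,\beta)\in F}r_{\sigma,\beta}s_{\beta^*}s_\sigma$. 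In the model case $n\geq 0$ each $s_{\beta^*}s_\sigma$ is nonzero only when $\beta=\sigma(0,m)$, and then equals $s_{\sigma(m,m+n)}$, so $ap_w=r''s_\gamma$ is a single scalar multiple of the real path $\gamma:=\sigma(m,m+n)$, which is forced to be closed of degree $n$ at $w$; moreover $ap_w\neq 0$ by Lemma~\ref{lem: ZKP arb}\eqref{cond: ZKP arb r=s} and Lemma~\ref{lem: lin inde}, so $r''\neq 0$. Now for every $\mu\in w\Lambda$ we get $as_\mu=(ap_w)s_\mu=r''s_{\gamma\mu}$. Applying aperiodicity at $w$ to the degrees $0$ and $n$ yields $\lambda\in w\Lambda$ with $d(\lambda)\geq n$ and $\lambda(0,d(\lambda)-n)\neq\lambda(n,d(\lambda))$; this inequality says $\lambda$ is not the initial segment of $\gamma\lambda$ of degree $d(\lambda)$, so $s_{\lambda^*}s_{\gamma\lambda}=0$ and hence
\[
ap_{s(\lambda)}=s_{\lambda^*}as_\lambda=s_{\lambda^*}(r''s_{\gamma\lambda})=0.
\]
Then $as_\lambda=as_\lambda p_{s(\lambda)}=s_\lambda(ap_{s(\lambda)})=0$, contradicting $as_\lambda=r''s_{\gamma\lambda}\neq 0$ (here $r''\neq 0$ and $s_{\gamma\lambda}\neq 0$ by Lemma~\ref{lem: lin inde}). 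This contradiction forces $a=0$.

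The hard part is making (ii) work for a general $n\in\Z^k$ rather than for $n\geq 0$. When $n$ has mixed signs the degrees $d(\sigma)=m+n$ and $d(\beta)=m$ are incomparable, so $s_{\beta^*}s_\sigma$ is no longer a single tail but the sum over common extensions from \cite[Lemma~3.1]{A-PCaHR11}, and $ap_w$ becomes a sum of terms $s_\eta s_{\zeta^*}$ with $d(\eta)=n^+$ and $d(\zeta)=n^-$ rather than a single shift. I expect the resolution to be the same in spirit: compress $a$ by a path long enough to dominate all degrees in $F$, read off that $a$ implements a degree-$n$ periodicity at $w$, and then apply aperiodicity at $w$ with the degrees $n^-$ and $n^+$ (which differ precisely because $n\neq 0$) to produce a path $\lambda$ breaking that periodicity. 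The genuine obstacle is the bookkeeping with the common-extension formula needed to show that the compressed element really is a weighted shift and that the distinguishing subpaths supplied by aperiodicity make it vanish; this is the step I would spend the most care on.
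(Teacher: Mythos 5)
Your reduction to homogeneous components is correct (the center of a graded ring is graded), and your treatment of the degree-zero part is a genuine simplification: it shows that the diagonal form of the degree-zero component needs only (KP3), the normal form, and \cite[Lemma~2.3]{BH}, not aperiodicity. Your model case $n\geq 0$ also checks out. But the proof is not complete: the case of homogeneous degree $n\in\Z^k$ with mixed signs --- which for $k\geq 2$ is the typical case, not a corner case, and cannot be removed by reshaping since the degree is invariant --- is left as an expectation rather than an argument. There the compression $s_{\sigma^*}as_\sigma=ap_w$ produces, via the common-extension formula of \cite[Lemma~3.1]{A-PCaHR11}, a sum of terms $s_\eta s_{\zeta^*}$ rather than a single shift $r''s_\gamma$, and every subsequent step of yours (the identity $as_\mu=r''s_{\gamma\mu}$, the choice of $\lambda$ killing $s_{\lambda^*}s_{\gamma\lambda}$) leans on that single-shift structure. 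As written, nothing rules out a central element of mixed homogeneous degree, so step (ii) --- the heart of the lemma --- is unproven in general.

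For comparison, the paper avoids the case split entirely. It fixes an arbitrary $(\sigma,\tau)\in F$ (degrees possibly incomparable), sets $m=\vee_{(\alpha,\beta)\in F}(d(\alpha)\vee d(\beta))$, and invokes the strengthened form of aperiodicity in \cite[Lemma~6.2]{HRSW} to produce a single path $\lambda\in s(\sigma)\Lambda$ with $d(\lambda)\geq m$ such that any two paths into $s(\sigma)$ of degree at most $m$ that agree after extension by $\lambda(0,d(\lambda))$ must coincide. Then $s_{\lambda^*}s_{\sigma^*}as_\tau s_\lambda\neq 0$ by the argument of \cite[Proposition~4.9]{A-PCaHR11}, centrality rewrites this as $as_{(\sigma\lambda)^*}s_{\tau\lambda}$, and the separation property collapses $s_{(\sigma\lambda)^*}s_{\tau\lambda}$ to $\delta_{\sigma,\tau}$ times a nonzero element, forcing $\sigma=\tau$. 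This is precisely the tool your sketch is missing: one separating path dominating all degrees occurring in $F$, rather than an application of the bare definition of aperiodicity to the pair $(0,n)$. You could either import \cite[Lemma~6.2]{HRSW} to finish your case (ii), or drop the homogeneous decomposition and run the uniform argument directly; your degree-zero observation would then survive as a remark that aperiodicity is needed only to kill the components of nonzero degree.
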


\begin{proof}  Suppose $a\in Z(\KP_R(\Lambda))\setminus\{0\}$ with $a=\sum_{(\alpha,\beta)\in F} r_{\alpha,\beta} s_{\alpha} s_{\beta^*}$ in normal form.   Let $(\sigma,\tau)\in F.$  From Lemma~2.3 in \cite{BH} we know that 
$s_{\sigma^*} a s_\tau\neq 0$.
Let $m=\vee_{(\alpha,\beta)\in F}(d(\alpha)\vee d(\beta)).$ Since $\Lambda$ is aperiodic, by \cite[Lemma~6.2]{HRSW}, there exist  $\lambda\in s(\sigma)\Lambda$ with $d(\lambda)\geq m$ such that 
\begin{equation}\label{eq aper}
\left.\begin{array}{l}\alpha,\beta\in \Lambda s(\sigma),~d(\alpha),d(\beta)\leq m\\
\text{and}~~\alpha\lambda(0,d(\lambda))=\beta\lambda(0,d(\lambda))\end{array}\right\}
\implies \alpha=\beta.
\end{equation}  
The same argument as in \cite[Proposition~4.9]{A-PCaHR11} now shows that $s_{\lambda^*}s_{\sigma^*}as_{\tau}s_{\lambda}\neq 0$.  Since $a\in Z(\KP_R(\Lambda))$,  $0\neq s_{\lambda^*}s_{\sigma^*}as_{\tau}s_{\lambda}=as_{\lambda^*}s_{\sigma^*}s_{\tau}s_{\lambda}=as_{(\sigma\lambda)^*}s_{\tau\lambda}$. Thus
\begin{align*}
0\neq s_{(\sigma\lambda)^*}s_{\tau\lambda}
&=s_{\sigma\lambda(d(\lambda), d(\lambda)+d(\sigma))^*}s_{\sigma\lambda(0,d(\lambda))^*}s_{\tau\lambda(0,d(\lambda))}s_{\tau\lambda(d(\lambda),d(\lambda)+d(\tau))}\\
&=\delta_{\sigma,\tau} s_{\sigma\lambda(d(\lambda), d(\lambda)+d(\sigma))^*}s_{\tau\lambda(d(\lambda),d(\lambda)+d(\tau))}\quad\text{using \eqref{eq aper}}.
\end{align*} Thus $\sigma=\tau$.

Since $(\sigma,\tau)\in F$ was arbitrary we have  $\alpha=\beta$ for all $(\alpha,\beta)\in F$.  Let $G=\{\alpha\in \Lambda: (\alpha,\alpha)\in F\}$ and write $r_{\alpha}$ for  $r_{\alpha,\alpha}$. Note $G\subset\Lambda^n$ because $F\subset \Lambda\times\Lambda^n$ for some. Thus
$a=\sum_{\alpha\in G} r_{\alpha}s_{\alpha}s_{\alpha^*}$ in normal form as desired.  \end{proof}

Our main theorem (Theorem~\ref{thm: ZKP fin}) has two cases: $\Lambda^0$ finite and infinite.

\begin{lem}\label{lem: KP unital}  $\KP_R(\Lambda)$ has an identity if and only if $\Lambda^0$ is finite.\end{lem}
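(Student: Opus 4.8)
The plan is to prove both directions of the biconditional, with the ``if'' direction being the straightforward one and the ``only if'' direction requiring more care.

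For the easy direction, suppose $\Lambda^0$ is finite. Then I would set $1:=\sum_{v\in\Lambda^0}p_v$, which is a well-defined element of $\KP_R(\Lambda)$ since the sum is finite. To check this is an identity, it suffices (by linearity) to verify that $1$ acts as an identity on each generator $s_\lambda s_{\mu^*}$ in the spanning set. Using (KP2), one sees that $p_{r(\lambda)}s_\lambda=s_\lambda$ and $s_{\mu^*}p_{r(\mu)}=s_{\mu^*}$, and because the $p_v$ are mutually orthogonal idempotents by (KP1), multiplying $s_\lambda s_{\mu^*}$ on the left by $\sum_v p_v$ picks out $p_{r(\lambda)}s_\lambda s_{\mu^*}=s_\lambda s_{\mu^*}$, and similarly on the right. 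Hence $1$ is a two-sided identity.

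For the ``only if'' direction I argue by contrapositive: assume $\Lambda^0$ is infinite and show $\KP_R(\Lambda)$ has no identity. Suppose for contradiction that $e$ is an identity, and write $e$ in normal form $\sum_{(\alpha,\beta)\in F}r_{\alpha,\beta}s_\alpha s_{\beta^*}$. The key point is that $F$ is \emph{finite}, so only finitely many vertices can appear as sources or ranges of the paths in $F$. Since $\Lambda^0$ is infinite, I can choose a vertex $w\in\Lambda^0$ that is not the range of any $\beta$ appearing in $F$ (indeed, avoid the finite set $\{r(\beta):(\alpha,\beta)\in F\}$). Then I would compute $ep_w$: using (KP2), $s_{\beta^*}p_w=\delta_{r(\beta),w}s_{\beta^*}=0$ for every term, so $ep_w=0$. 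But $e$ is the identity, so $ep_w=p_w$, forcing $p_w=0$, which contradicts the fact that $p_w\neq0$ (Theorem~3.4 of \cite{A-PCaHR11}).

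The main obstacle, and the step deserving the most attention, is making the vertex-avoidance argument rigorous via the normal form. The crucial structural input is that the normal form guarantees $F$ is a \emph{finite} subset of $\Lambda\times\Lambda^m$, so that $\{r(\beta):(\alpha,\beta)\in F\}$ is a finite subset of the infinite set $\Lambda^0$, leaving room to choose $w$ outside it. One subtlety is that $e=0$ would itself contradict $e$ being an identity in a nonzero algebra (and $\KP_R(\Lambda)\neq\{0\}$ since it is generated by a \emph{nonzero} family), so I may assume $e\neq0$ and hence that a genuine normal form exists. With $w$ chosen, the computation $ep_w=0\neq p_w$ closes the argument cleanly.
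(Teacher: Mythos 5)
Your proof is correct and follows essentially the same approach as the paper: $\sum_{v\in\Lambda^0}p_v$ serves as the identity when $\Lambda^0$ is finite, and in the infinite case one writes the putative identity in normal form, picks a vertex $w$ outside the finite set $\{r(\beta):(\alpha,\beta)\in F\}$, and derives the contradiction $p_w=1\cdot p_w=0$. Your extra remarks (verifying the identity on spanning elements, noting $e\neq 0$ before invoking the normal form) are sound refinements of the same argument.
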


\begin{proof}  If $\Lambda^0$ is finite, then $\sum_{v\in \Lambda^0} p_v$ is an identity for $\KP_R(\Lambda)$.  Conversely, assume that  $\KP_R(\Lambda)$ has an identity $1_{\KP_R(\Lambda)}$.  By way of contradiction, suppose that $\Lambda^0$ is infinite. 
Write $1_{\KP_R(\Lambda)}$ in normal form
$\sum_{(\alpha,\beta)\in F} r_{\alpha,\beta}s_{\alpha}s_{\beta^*}$.
Since $F$ is finite, so is $W:=\{v\in\Lambda^0:\exists (\alpha,\beta)\in F\text{~~with~~}v=r(\beta)\}$.  Thus there exists $w\in \Lambda^0\setminus W$.  Now $p_w=1_{\KP_R(\Lambda)}p_w=\sum_{(\alpha,\beta)\in F} r_{\alpha,\beta}s_{\alpha}s_{\beta^*}p_w=0$ because $w\neq r(\beta)$ for any of the $\beta$.
This contradiction shows that  $\Lambda^0$ must be finite.
\end{proof}

\begin{thm}\label{thm: ZKP fin}  Let $\Lambda$ be a row-finite $k$-graph with no sources and $R$ a commutative ring with $1$. 
\begin{enumerate}
\item\label{lem: ZKP fin 1} Suppose $\Lambda$ is  aperiodic and cofinal,  and that $\Lambda^0$  is finite.  Then $Z(\KP_R(\Lambda))=R1_{\KP_R(\Lambda)}$.
\item\label{lem: ZKP fin 2}  Suppose that $\Lambda$ is cofinal and  $\Lambda^0$ is infinite. Then $Z(\KP_R(\Lambda))=\{0\}$.
\end{enumerate}
\end{thm}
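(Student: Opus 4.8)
The plan is to dispose of part~\eqref{lem: ZKP fin 2} in a line and to concentrate on part~\eqref{lem: ZKP fin 1}. For part~\eqref{lem: ZKP fin 2}, suppose for contradiction that $0\neq a\in Z(\KP_R(\Lambda))$ and write it in normal form $\sum_{(\alpha,\beta)\in F}r_{\alpha,\beta}s_\alpha s_{\beta^*}$ with $F$ finite. Lemma~\ref{lem: ZKP cof} forces $\{v\in\Lambda^0:\exists(\alpha,\beta)\in F,\ v=r(\beta)\}=\Lambda^0$; but this set is contained in $\{r(\beta):(\alpha,\beta)\in F\}$, which is finite, contradicting that $\Lambda^0$ is infinite. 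Hence $Z(\KP_R(\Lambda))=\{0\}$. For part~\eqref{lem: ZKP fin 1}, the inclusion $R1_{\KP_R(\Lambda)}\subseteq Z(\KP_R(\Lambda))$ is immediate since scalars are central in an $R$-algebra, and $r\mapsto r1_{\KP_R(\Lambda)}$ is injective because $1_{\KP_R(\Lambda)}=\sum_{v}p_v$ (Lemma~\ref{lem: KP unital}), the $\{p_v\}$ are linearly independent (Lemma~\ref{lem: lin inde} with $m=0$), and $rp_v\neq0$ for $r\neq0$.

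For the reverse inclusion I start with $0\neq a\in Z(\KP_R(\Lambda))$ and apply Lemma~\ref{lem: ZKP aper} (aperiodicity) to write $a=\sum_{\alpha\in G}r_\alpha s_\alpha s_{\alpha^*}$ in normal form with $G\subseteq\Lambda^n$; reshaping each $s_\alpha s_{\alpha^*}$ by (KP4) I may assume $n\geq(1,\dots,1)$. The engine of the proof is to commute $a$ past ghost paths. For $\lambda\in\Lambda^n$ (using the convention $r_\lambda=0$ when $\lambda\notin G$) one computes $s_{\lambda^*}a=r_\lambda s_{\lambda^*}$ and $as_{\lambda^*}=\sum_{\alpha\in G,\ r(\alpha)=s(\lambda)}r_\alpha s_\alpha s_{(\lambda\alpha)^*}$. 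Expanding $s_{\lambda^*}=p_{s(\lambda)}s_{\lambda^*}$ by (KP4) and comparing coefficients via the linear independence in Lemma~\ref{lem: lin inde} (applied with $m=2n$) yields the propagation rule
\[
r(\beta)=s(\lambda)\ \Longrightarrow\ r_\beta=r_\lambda \qquad (\beta,\lambda\in\Lambda^n).
\]
Two consequences follow. First, if some $\lambda\in\Lambda^n$ were missing from $G$, then $r_\lambda=0$ would force $r_\beta=0$ for every $\beta$ with $r(\beta)=s(\lambda)$; since cofinality gives, through Lemma~\ref{lem: ZKP cof}, that every vertex is the range of some path in $G$, this is impossible, so $G=\Lambda^n$. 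Second, the rule shows $r_\alpha=r_\beta$ whenever $\beta$ extends $\alpha$ (that is, $r(\beta)=s(\alpha)$), so the coefficient is constant along any concatenation of shape-$n$ paths, and in particular along every infinite path.

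It remains to upgrade this local constancy to a single global value $r$, and I expect this to be the main obstacle. Using Lemma~\ref{lem: ZKP arb}\eqref{cond: ZKP arb closed path} for the diagonal form, I obtain a closed path $c=\eta_1\cdots\eta_l$ with each $\eta_i\in\Lambda^n$, based at a vertex $w$; then $x=cc\cdots$ is an infinite path all of whose shape-$n$ windows carry one coefficient $r$. For an arbitrary vertex $v$, cofinality produces $m\in\N^k$ and $\rho\in v\Lambda x(m)$; splicing $\rho$ onto the tail of $x$ and invoking the propagation rule transports the value $r$ to the coefficients of shape-$n$ paths with range $v$, forcing $r_\alpha=r$ for all $\alpha\in\Lambda^n$. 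The delicate point is the bookkeeping in the $\Z^k$-grading: the connecting path $\rho$ must be aligned with shape-$n$ windows, and sink vertices (where no outgoing shape-$n$ path exists) must be reached through their incoming edges, which is precisely where cofinality is indispensable. Once all $r_\alpha$ equal $r$, (KP4) and $\Lambda^0$ finite give $a=r\sum_{\alpha\in\Lambda^n}s_\alpha s_{\alpha^*}=r\sum_{v\in\Lambda^0}p_v=r1_{\KP_R(\Lambda)}$, completing the proof.
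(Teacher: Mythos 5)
Your part~\eqref{lem: ZKP fin 2} is exactly the paper's argument, and the first half of your part~\eqref{lem: ZKP fin 1} is sound: the derivation of the propagation rule $r(\beta)=s(\lambda)\Rightarrow r_\beta=r_\lambda$ for $\beta,\lambda\in\Lambda^n$ is a correct computation, and combining it with Lemma~\ref{lem: ZKP cof} to force $G=\Lambda^n$ recovers the paper's first claim (the paper gets it from $s_\lambda a=as_\lambda=0$ and Lemma~\ref{lem: lin inde}, which is the same idea). The gap is in the last step, and it sits precisely where you say you ``expect the main obstacle'' to be: your rule only identifies $r_\lambda$ with $r_\beta$ when $\lambda\beta$ is a \emph{composable pair of shape-$n$ paths}, so it makes the coefficient a function $g$ of the source vertex satisfying $g(r(\beta))=g(s(\beta))$ for $\beta\in\Lambda^n$ only. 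To conclude that $g$ is constant you would need $\Lambda^0$ to be connected by \emph{shape-$n$} paths, and cofinality only gives connectivity by paths of arbitrary degree. Your splicing of $\rho\in v\Lambda x(m)$ onto the tail of $x$ does not repair this: the shape-$n$ windows of $\rho x(m,\infty)$ eventually equal $x(m+jn-d(\rho),\,m+(j+1)n-d(\rho))$, which are offset from the windows $\eta_i=x(in,(i+1)n)$ carrying the known value $r$ unless $d(\rho)\equiv m\pmod n$, and nothing forces that. (A $2$-cycle with $n=2$ shows the purely combinatorial statement ``$g(r(\beta))=g(s(\beta))$ for all $\beta\in\Lambda^n$ implies $g$ constant'' can fail for a cofinal graph, so some further use of centrality is genuinely required, not just bookkeeping.)

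The fix is to commute $a$ past longer elements, which is what the paper does. Given $\mu,\nu\in\Lambda^n$, choose $x\in s(\mu)\Lambda^\infty$, use cofinality to get $\gamma\in s(\nu)\Lambda x(m)$, set $\eta=x(0,m)$, and compute $as_{\nu\gamma}s_{(\mu\eta)^*}=s_{\nu\gamma}s_{(\mu\eta)^*}a$ directly with (KP3): the left side collapses to $r_\nu s_{\nu\gamma}s_{(\mu\eta)^*}$ because $s_{\alpha^*}s_{\nu\gamma}=\delta_{\alpha,\nu}s_\gamma$ for $\alpha\in\Lambda^n$, and the right side to $r_\mu s_{\nu\gamma}s_{(\mu\eta)^*}$, so $r_\mu=r_\nu$ since $s_{\nu\gamma}s_{(\mu\eta)^*}\neq 0$. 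The point is that $\mu\eta$ and $\nu\gamma$ need only share a \emph{source}; their degrees play no role, so no alignment with shape-$n$ windows is needed. Equivalently, you could rescue your own scheme by running your ghost-path computation with $s_{\lambda^*}$ for $\lambda$ of arbitrary degree $\geq n$, which yields $r_{\lambda(0,n)}=r_\beta$ for all $\beta\in s(\lambda)\Lambda^n$ and then closes the connectivity gap. As written, though, the proposal does not contain a complete argument for $r_\mu=r_\nu$.
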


\begin{proof} \eqref{lem: ZKP fin 1} Suppose $\Lambda$ is  aperiodic and cofinal,  and that $\Lambda^0$  is finite. Let $a\in Z(\KP_R(\Lambda))\setminus\{0\}$.  Since $\Lambda$ is aperiodic, by Lemma~\ref{lem: ZKP aper}, there exist $G\subset \Lambda^n$ such that $a=\sum_{\alpha\in G} r_{\alpha} s_{\alpha}s_{\alpha^*}$ is in normal form.  Since $\Lambda$ is row-finite and $\Lambda^0$ is finite, $\Lambda^n$ is finite.  

We claim that $G=\Lambda^n$.  By way of contradiction, suppose that $G\neq \Lambda^n$, and let $\lambda\in \Lambda^n\setminus G.$  Then $as_\lambda=0$ by (KP3).  But since $a\in Z(\KP_R(\Lambda))$,
\[
0=as_\lambda=s_\lambda a=\sum_{\substack{\alpha\in G\\r(\alpha)=s(\lambda)}} r_\alpha s_{\lambda\alpha}s_{\alpha^*}.
\]
Since $\Lambda$ is confinal,  $\{r(\alpha):\alpha\in G\}=\Lambda^0$ by Lemma~\ref{lem: ZKP cof}.  Thus  $S=\{\alpha\in G: r(\alpha)=s(\lambda)\}\neq \emptyset$.  But $\{s_{\lambda\alpha}s_{\alpha^*}:\alpha\in S\}$ is linearly independent by Lemma~\ref{lem: lin inde}. Thus $r_{\alpha}=0$ for $\alpha\in S$, contradicting our choice of $\{r_{\alpha}\}$.  It follows that  $G=\Lambda^n$ as claimed,  and that
\[a=\sum_{\alpha\in \Lambda^n} r_{\alpha} s_{\alpha}s_{\alpha^*}.\]

Next we claim that $r_\mu=r_\nu$ for all $\mu, \nu\in \Lambda^n$.  Let $\mu,\nu\in \Lambda^n$. Let $x\in s(\mu)\Lambda^\infty$.  Since $\Lambda$ is cofinal,  there exists $m\in \N^k$ and $\gamma\in s(\nu)\Lambda s(x(m))$.  Set $\eta=x(0,m)$.  Now
\begin{align*}
r_\mu s_{\nu\gamma} s_{(\mu\eta)^*} 
&=r_\mu s_{\nu\gamma} s_{\eta^*} s_{\mu^*}=s_{\nu\gamma} s_{\eta^*}\sum_{\alpha\in\Lambda^n} r_\mu s_{\mu^*}s_\alpha s_{\alpha^*}\\
&=s_{\nu\gamma} s_{\eta^*}s_{\mu^*}\sum_{\alpha\in\Lambda^n} r_\alpha s_\alpha s_{\alpha^*}
=s_{\nu\gamma} s_{(\mu\eta)^*}a\\
&=as_{\nu\gamma} s_{(\mu\eta)^*}
=\sum_{\alpha\in \Lambda^n} r_\alpha s_\alpha s_{\alpha^*}s_{\nu}s_{\gamma} s_{(\mu\eta)^*}
=r_\nu s_{\nu\gamma} s_{(\mu\eta)^*}.
\end{align*}
Since $s_{\nu\gamma} s_{(\mu\eta)^*}\neq 0$ this implies $r_\mu=r_\nu$. Let $r=r_\mu$.  Now \[a=\sum_{\alpha\in \Lambda^n} rs_\alpha s_{\alpha^*}=\sum_{v\in \Lambda^0} \sum_{\alpha\in v\Lambda^n} r s_\alpha s_{\alpha^*}=r\sum_{v\in \Lambda^0} p_v=r1_{\KP_R(\Lambda)}\]
as desired.  

\eqref{lem: ZKP fin 2}   Suppose there exists $a\in Z(\KP_R(\Lambda))\setminus\{0\}$.  Write $a=\sum_{(\alpha,\beta)\in F} r_{\alpha,\beta} s_{\alpha}s_{\beta^*}$ in normal form.   Then Lemma~\ref{lem: ZKP cof} gives that $\{v\in\Lambda^0: \exists (\alpha,\beta)\in F\text{~~with~~}  v=r(\beta)\}=\Lambda^0$, contradicting that $F$ is finite.
\end{proof}

Simplicity of $C^*(\Lambda)$ played an important role in \S\ref{motivation}. To reconcile this with Theorem~\ref{thm: ZKP fin}, recall from \cite{Tom11} that an ideal $I\in \KP_R(\Lambda)$ is \emph{basic} if $rp_v\in I$ for $r\in R\setminus \{0\}$ then $p_v\in I$, and  that $\KP_R(\Lambda)$ is \emph{basically simple}\label{page-basic} if its only basic ideals are $\{0\}$ and $\KP_R(\Lambda)$. By \cite[Theorem~5.14]{A-PCaHR11}, $\KP_R(\Lambda)$ is basically simple if and only if $\Lambda$ is cofinal and aperiodic (and by \cite[Theorem~6.1]{A-PCaHR11}, $\KP_R(\Lambda)$ is  simple if and only if $R$ is a field and $\Lambda$ is cofinal and aperiodic). Thus Theorem~\ref{thm: ZKP fin} is in the spirit of  Corollary~\ref{cor complex}.

\section{Commutative Kumjian-Pask algebras}

We view $\N^k$ as a category with one object $\star$ and composition given by addition, and use $\{e_i\}_{i=1}^k$ to denote the standard basis of $\N^k$.

\begin{ex}
\label{ex: Bk}
   Let $d:\N^k\to \N^k$ be the identity map. Then $(\N^k, d)$ is a $k$-graph. By \cite[Example~7.1]{A-PCaHR11},     $\KP_R(\N^k)$ is commutative with identity $p_\star$,  and $\KP_R(\N^k)$ is isomorphic to the ring of  Laurent polynomials  $R[x_1, x_1\inv, \ldots, x_k, x_k\inv]$ in $k$ commuting indeterminates.
\end{ex} 

\begin{lem}
\label{lem: dir sum}
Suppose $\Lambda=\Lambda_1\bigsqcup \Lambda_2$ is a disjoint union of two $k$-graphs.  Then $\KP_R(\Lambda)=\KP_R(\Lambda_1)\oplus \KP_R(\Lambda_2)$.
\end{lem}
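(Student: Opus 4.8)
The plan is to use the universal property of $\KP_R(\Lambda)$ to construct a homomorphism into the direct sum, and then invoke the graded-uniqueness theorem to see it is an isomorphism. Let $(p,s)$ generate $\KP_R(\Lambda)$ and, for $i=1,2$, let $(p^i,s^i)$ generate $\KP_R(\Lambda_i)$. Because $\Lambda=\Lambda_1\sqcup\Lambda_2$, every vertex, path and ghost path of $\Lambda$ lies in exactly one $\Lambda_i$, and I use this to define block-diagonal families in $\KP_R(\Lambda_1)\oplus\KP_R(\Lambda_2)$. For objects belonging to $\Lambda_1$ I set $Q_v=(p^1_v,0)$, $T_\lambda=(s^1_\lambda,0)$ and $T_{\lambda^*}=(s^1_{\lambda^*},0)$, and symmetrically $Q_v=(0,p^2_v)$, $T_\lambda=(0,s^2_\lambda)$ for objects in $\Lambda_2$.

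First I would check that $(Q,T)$ is a Kumjian-Pask $\Lambda$-family. All four relations reduce to the corresponding relations in the two summands together with the observation that products mixing the two components vanish. The only point worth spelling out is (KP4): for $v\in\Lambda_1^0$ one has $v\Lambda^n=v\Lambda_1^n$ (a path with range in $\Lambda_1^0$ cannot leave $\Lambda_1$), so $\sum_{\lambda\in v\Lambda^n}T_\lambda T_{\lambda^*}=\bigl(\sum_{\lambda\in v\Lambda_1^n}s^1_\lambda s^1_{\lambda^*},0\bigr)=(p^1_v,0)=Q_v$ by (KP4) in $\KP_R(\Lambda_1)$, and the case $v\in\Lambda_2^0$ is identical. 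For (KP2) composability in $\Lambda$ forces both paths into the same $\Lambda_i$; for (KP3), when $d(\lambda)=d(\mu)$ with $\lambda,\mu$ in different components both sides are $0$. The universal property then yields an $R$-algebra homomorphism $\phi:=\pi_{Q,T}:\KP_R(\Lambda)\to\KP_R(\Lambda_1)\oplus\KP_R(\Lambda_2)$ with $\phi(s_\lambda s_{\mu^*})=(s^1_\lambda s^1_{\mu^*},0)$ for $\lambda,\mu\in\Lambda_1$ and the analogous formula on $\Lambda_2$.

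To see $\phi$ is surjective, recall $\KP_R(\Lambda_i)=\spn_R\{s^i_\lambda s^i_{\mu^*}\}$; since $\phi$ is $R$-linear and hits every $(s^1_\lambda s^1_{\mu^*},0)$ and every $(0,s^2_\lambda s^2_{\mu^*})$, its image contains $\KP_R(\Lambda_1)\oplus 0$ and $0\oplus\KP_R(\Lambda_2)$, hence all of the direct sum. For injectivity I would apply the graded-uniqueness theorem, giving the direct sum the componentwise $\Z^k$-grading. The map $\phi$ is graded because it sends the spanning element $s_\lambda s_{\mu^*}$, which has degree $d(\lambda)-d(\mu)$, to a homogeneous element of the same degree; and $\phi(rp_v)=(rp^1_v,0)\neq 0$ for nonzero $r\in R$ and $v\in\Lambda_1^0$ (and symmetrically on $\Lambda_2^0$), since $rp^i_v\neq 0$ in $\KP_R(\Lambda_i)$ by Theorem~3.4 of \cite{A-PCaHR11}. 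Hence $\phi$ is injective, and therefore an isomorphism.

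I do not anticipate a serious obstacle: the content is entirely bookkeeping enforced by the disjointness of $\Lambda_1$ and $\Lambda_2$. The one place to be careful is confirming that no path of $\Lambda$ crosses between the two components, that is, that $v\Lambda^n\subset\Lambda_i$ whenever $v\in\Lambda_i^0$, which is exactly what makes (KP4) and the composability hypotheses in (KP2) and (KP3) split cleanly along the two summands.
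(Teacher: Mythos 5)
Your proof is correct, but it runs in the opposite direction from the paper's. You build a block-diagonal Kumjian--Pask $\Lambda$-family in the external direct sum $\KP_R(\Lambda_1)\oplus\KP_R(\Lambda_2)$, use the universal property of $\KP_R(\Lambda)$ to get a homomorphism $\phi$ \emph{out of} $\KP_R(\Lambda)$, and then get injectivity from the graded-uniqueness theorem and surjectivity from the spanning set. The paper instead restricts the generating family $(p,s)$ of $\KP_R(\Lambda)$ to each $\Lambda_i$, uses the universal property of each $\KP_R(\Lambda_i)$ to get graded injections \emph{into} $\KP_R(\Lambda)$, checks that the two images annihilate each other, and concludes that their internal direct sum is a subalgebra containing all generators, hence everything. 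The key observation is the same in both arguments --- a path with range in $\Lambda_i^0$ cannot leave $\Lambda_i$, so $v\Lambda^n=v\Lambda_i^n$ and (KP4) splits cleanly; the paper needs this to see that the restricted family is a $\Lambda_i$-family, and you need it to see that your block-diagonal family is a $\Lambda$-family. What your route buys is that the isomorphism with the external direct sum is exhibited directly, with injectivity and surjectivity handled by standard tools, whereas the paper's internal-direct-sum formulation leaves slightly implicit why the sum of the two images is direct (one has to note that an element of the intersection annihilates all generators and hence vanishes). The cost is that you must verify all four (KP) relations for $(Q,T)$, though as you say this is pure bookkeeping. Either argument is acceptable.
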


\begin{proof}
For each $i=1,2$, let $(q^i, t^i)$ be the generating Kumjian-Pask  $\Lambda_i$-family of $\KP_R(\Lambda_i)$,  and let $(p, s)$ be the generating Kumjian-Pask  $\Lambda$-family of $\KP_R(\Lambda)$. Restricting $(p,s)$ to $\Lambda_i$ gives a $\Lambda_i$-family  in $\KP_R(\Lambda)$, and hence the universal property for $\KP_R(\Lambda_i)$ gives a homomorphism $\pi_{p,s}^i:\KP_R(\Lambda_i)\to \KP_R(\Lambda)$ such that $\pi_{p,s}^i\circ (q^i, t^i)=(p,s)$. Each $\pi_{p,s}^i$ is graded, and the graded uniqueness theorem (\cite[Theorem~4.1]{A-PCaHR11}) implies that $\pi_{p,s}^i$  is injective.  

We now identify $\KP_R(\Lambda_i)$ with its image in $\KP_R(\Lambda)$.  If $\mu\in \Lambda_1$ and $\lambda\in \Lambda_2$, then  $s_\mu s_\lambda=s_\mu p_{s(\mu)} p_{r(\lambda)} s_\lambda =0$.  Similarly $s_\lambda s_\mu, s_{\mu^*} s_{\lambda^*}, s_{\lambda^*} s_{\mu^*}, s_\lambda s_{\mu^*}, s_{\mu^*} s_\lambda, s_{\lambda^*} s_\mu, s_\mu s_{\lambda^*}$ are all zero.  Thus $\KP_R(\Lambda_1)\KP_R(\Lambda_2)=0=\KP_R(\Lambda_2)\KP_R(\Lambda_1)$, and the internal direct sum $\KP_R(\Lambda_1)\oplus \KP_R(\Lambda_2)$ is a subalgebra of $\KP_R(\Lambda)$. Finally, $\KP_R(\Lambda_1)\oplus \KP_R(\Lambda_2)$ is  all of $\KP_R(\Lambda)$ since the former contains all the generators of later.  This gives the result.
\end{proof}

\begin{prop}
\label{prop com}
Let $\Lambda$ be a row-finite $k$-graph with no sources and $R$ a commutative ring with $1$.  Then the following conditions are equivalent:
\begin{enumerate}
\item\label{it 1}$\KP_R(\Lambda)$ is commutative;
\item\label{it 2} $r=s$ on $\Lambda$ and $r|_{\Lambda^n}$ is injective;
\item\label{it 3} $\Lambda\cong\bigsqcup_{v\in \Lambda^0} \N^k$;
\item\label{it 4} $\KP_R(\Lambda)\cong \bigoplus_{v\in \Lambda^0} R[x_1, x_1\inv,\ldots, x_k, x_k\inv]$.
\end{enumerate}
\end{prop}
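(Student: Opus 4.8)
The plan is to prove the cycle of implications $\eqref{it 1}\Rightarrow\eqref{it 2}\Rightarrow\eqref{it 3}\Rightarrow\eqref{it 4}\Rightarrow\eqref{it 1}$. Two of these are essentially formal. The implication $\eqref{it 4}\Rightarrow\eqref{it 1}$ is immediate, since the Laurent polynomial ring $R[x_1,x_1\inv,\ldots,x_k,x_k\inv]$ is commutative, any direct sum of commutative $R$-algebras is commutative, and an algebra isomorphic to a commutative one is commutative. For $\eqref{it 3}\Rightarrow\eqref{it 4}$ I would first upgrade Lemma~\ref{lem: dir sum} to an arbitrary disjoint union $\Lambda=\bigsqcup_i\Lambda_i$: because every element of $\KP_R(\Lambda)$ is a finite linear combination of generators, each of which lies in exactly one summand, the internal direct sum $\bigoplus_i\KP_R(\Lambda_i)$ already exhausts $\KP_R(\Lambda)$, and the cross-products vanish for exactly the reason given in the two-graph case, yielding $\KP_R(\Lambda)\cong\bigoplus_i\KP_R(\Lambda_i)$. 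Applying this to $\Lambda\cong\bigsqcup_{v\in\Lambda^0}\N^k$ and identifying each $\KP_R(\N^k)$ with $R[x_1,x_1\inv,\ldots,x_k,x_k\inv]$ via Example~\ref{ex: Bk} then gives $\eqref{it 4}$.

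The heart of the argument is $\eqref{it 1}\Rightarrow\eqref{it 2}$. Assuming $\KP_R(\Lambda)$ is commutative, I would first prove $r=s$. Fix $\lambda\in\Lambda^{\neq 0}$ and note $s_\lambda\neq 0$ since $s_{\lambda^*}s_\lambda=p_{s(\lambda)}\neq 0$ by (KP3). Commutativity gives $s_\lambda=p_{r(\lambda)}s_\lambda=s_\lambda p_{r(\lambda)}$, and using $s_\lambda=s_\lambda p_{s(\lambda)}$ from (KP2) together with orthogonality of the $p_v$, the right-hand side equals $s_\lambda$ when $r(\lambda)=s(\lambda)$ and equals $0$ otherwise; since $s_\lambda\neq 0$ this forces $r(\lambda)=s(\lambda)$. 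For injectivity of $r|_{\Lambda^n}$, take $\lambda,\mu\in\Lambda^n$ with $r(\lambda)=r(\mu)=:v$, so that $s(\lambda)=s(\mu)=v$ as well. Sandwiching and applying (KP3) twice gives $s_{\lambda^*}(s_\lambda s_{\mu^*})s_\mu=p_{s(\lambda)}p_{s(\mu)}=p_v\neq 0$, whence $s_\lambda s_{\mu^*}\neq 0$; on the other hand commutativity and (KP3) give $s_\lambda s_{\mu^*}=s_{\mu^*}s_\lambda=\delta_{\lambda,\mu}p_v$. Comparing, I conclude $\lambda=\mu$, so $r|_{\Lambda^n}$ is injective.

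It remains to prove $\eqref{it 2}\Rightarrow\eqref{it 3}$. Condition $\eqref{it 2}$ together with the standing assumption that $\Lambda$ has no sources forces $r|_{\Lambda^n}\colon\Lambda^n\to\Lambda^0$ to be a bijection for every $n$, so (using $r=s$) each set $v\Lambda^n$ is a singleton consisting of one loop at $v$. Setting $\Lambda_v:=v\Lambda$, the condition $r=s$ makes these pairwise disjoint subcategories with $\Lambda=\bigsqcup_{v\in\Lambda^0}\Lambda_v$, and the degree functor restricts to a degree-preserving bijection $d\colon\Lambda_v\to\N^k$ that respects composition, hence an isomorphism of $k$-graphs $\Lambda_v\cong\N^k$; assembling these gives $\eqref{it 3}$. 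Conceptually the proof is clean throughout; the hard part will be purely in the bookkeeping when $\Lambda^0$ is infinite, where $\bigsqcup$ and $\bigoplus$ must consistently be read as the algebraic (finite-support) coproduct and direct sum, and where the extension of Lemma~\ref{lem: dir sum} to infinitely many summands must be justified rather than merely invoked.
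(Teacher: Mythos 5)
Your proposal is correct and follows essentially the same route as the paper: the same cycle $\eqref{it 1}\Rightarrow\eqref{it 2}\Rightarrow\eqref{it 3}\Rightarrow\eqref{it 4}\Rightarrow\eqref{it 1}$, the same algebraic manipulations with (KP1)--(KP3) and the nonvanishing of the $p_v$ for $\eqref{it 1}\Rightarrow\eqref{it 2}$, and the same appeal to Lemma~\ref{lem: dir sum} and Example~\ref{ex: Bk} for $\eqref{it 3}\Rightarrow\eqref{it 4}$. The one place you are more careful than the paper is in flagging that Lemma~\ref{lem: dir sum}, stated for two summands, must be extended to the possibly infinite disjoint union over $\Lambda^0$; the paper applies it without comment, and your sketch of that extension is the right one.
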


\begin{proof}
$\eqref{it 1}\Rightarrow \eqref{it 2}$. Suppose that $\KP_R(\Lambda)$ is commutative.  By way of contradiction, suppose there exists $\lambda\in \Lambda$ such that $s(\lambda)\neq r(\lambda)$.  
Then $s_{\lambda^*}s_\lambda=s_\lambda s_{\lambda^*}$, and  \[p_{s(\lambda)}=p_{s(\lambda)}^2=p_{s(\lambda)}s_{\lambda^*} s_{\lambda}=p_{s(\lambda)}s_\lambda s_{\lambda^*}=p_{s(\lambda)}p_{r(\lambda)}s_\lambda s_{\lambda^*}=0.\]  But $p_v\neq 0$ for all $v\in\Lambda^0$ by \cite[Theorem~3.4]{A-PCaHR11}. This contradiction gives  $r=s$.  

Next, supppose $\lambda,\mu\in \Lambda^n$ with $\lambda\neq \mu$. By way of contradiction, suppose that $r(\lambda)=r(\mu)$. Since $r=s$, $r(\lambda)=s(\lambda)=s(\mu)=r(\mu)$.  Then 
\[
s_\lambda=p_{r(\lambda)}s_\lambda=p_{s(\lambda)} s_\lambda =p_{s(\mu)}s_\lambda=s_{\mu^*}s_\mu s_\lambda=s_{\mu^*} s_\lambda s_\mu =0
\]
by (KP3).  Now $p_{s(\lambda)}=0$, contradicting that $p_v\neq 0$ for all $v\in\Lambda^0$ by \cite[Theorem~3.4]{A-PCaHR11}. Thus $r$ is injective on $\Lambda^n$.

$\eqref{it 2}\Rightarrow \eqref{it 3}$ Assume that  $r=s$ on $\Lambda$ and that  $r|_{\Lambda^n}$ is injective. Since $r=s$,  $\{v\Lambda v\}_{v\in \Lambda^0}$ is a partition of $\Lambda$.  Since $r$ is injective on $\Lambda^{e_i}$, the subgraph $v\Lambda^{e_i} v$ has a single vertex $v$ and single edge $f^v_i$.  Thus $f_i^v\mapsto e_i$ defines a graph isomorphism $v\Lambda v\to \N^k$.  Hence $\Lambda=\bigsqcup_{v\in \Lambda^0} v\Lambda v\cong \bigsqcup_{v\in \Lambda^0} \N^k$.

$\eqref{it 3}\Rightarrow \eqref{it 4}$ Assume that $\Lambda\cong\bigsqcup_{v\in \Lambda^0} \N^k$. By Lemma~\ref{lem: dir sum}, $\KP_R(\Lambda)$ is isomorphic to $\bigoplus \KP_R(\N^k)$, and by Example~\ref{ex: Bk} each $\KP_R(\N^k)$ is isomorphic to $R[x_1,x_1\inv,\ldots, x_k, x_k\inv]$.  

$\eqref{it 4}\Rightarrow \eqref{it 1}$ Follows since $\bigoplus R[x_1,x_1\inv,\ldots, x_k, x_k\inv]$ is commutative.
\end{proof}




\providecommand{\bysame}{\leavevmode\hbox to3em{\hrulefill}\thinspace}
\providecommand{\MR}{\relax\ifhmode\unskip\space\fi MR }
\providecommand{\MRhref}[2]{%
  \href{http://www.ams.org/mathscinet-getitem?mr=#1}{#2}
}
\providecommand{\href}[2]{#2}

\end{document}